\title{The Heisenberg Plane}
\author{Steve J. Trettel}
\address{}
\email{trettel@stanford.edu}
\urladdr{www.stevejtrettel.site}
\title[The Heisenberg Plane]{The Heisenberg Plane}
\newtheorem{theorem}{Theorem}[section]    
\newtheorem{proposition}[theorem]{Proposition}
\newtheorem{corollary}[theorem]{Corollary}
\theoremstyle{definition}
\newtheorem{definition}[theorem]{Definition}    
\newtheorem{observation}{Observation}        
\newtheorem*{claim}{Claim}
\renewcommand{\D}{\mathbb{D}}
\newcommand{\A}{\mathbb{A}}
\renewcommand{\H}{\mathbb{H}}
\newcommand{\E}{\mathbb{E}}
\renewcommand{\S}{\mathbb{S}}
\newcommand{\RP}{\mathbb{R}\mathsf{P}}
\newcommand{\Hs}{\mathbb{H}\mathbbm{s}}
\newcommand{\ep}{\varepsilon}
\newcommand{\mat}[1]{\begin{matrix} #1 \end{matrix}}
\newcommand{\pmat}[1]{\begin{pmatrix} #1 \end{pmatrix}}
\newcommand{\smat}[1]{\left( \begin{smallmatrix} #1 \end{smallmatrix}\right )}
\newcommand{\Isom}{\mathsf{Isom}}
\newcommand{\Euc}{\mathsf{Euc}}
\newcommand{\Heis}{\mathsf{Heis}}
\newcommand{\diag}{\mathsf{diag}}
\newcommand{\GL}{\mathsf{GL}}
\newcommand{\PGL}{\mathsf{PGL}}
\newcommand{\SL}{\mathsf{SL}}
\renewcommand{\O}{\mathsf{O}}
\newcommand{\PO}{\mathsf{PO}}
\newcommand{\SO}{\mathsf{SO}}
\newcommand{\heis}{\mathfrak{heis}}
\newcommand{\inject}{\hookrightarrow}
\newcommand{\surject}{\twoheadrightarrow}
\DeclareMathOperator{\Hom}{Hom}
\newcommand{\dev}{\mathsf{dev}}
\newcommand{\hol}{\mathsf{hol}}
\newcommand{\Area}{\mathsf{Area}}
\newcommand{\inv}{^{-1}}
\begin{document}

\begin{abstract}   
The geometry of the Heisenberg group acting on the plane arises naturally in geometric topology as a degeneration of the familiar spaces $\S^2,\H^2$ and $\E^2$ via \emph{conjugacy limit} as defined by Cooper, Danciger, and Wienhard.
This paper considers the deformation and regeneration of Heisenberg structures on orbifolds, adding a carefully worked low-dimensional example to the existing literature on geometric transitions.
In particular, the closed orbifolds admitting Heisenberg structures are classified, and their deformation spaces are computed.
Considering the regeneration problem, which Heisenberg tori arise as rescaled limits of collapsing paths of constant curvature cone tori is completely determined in the case of a single cone point.
\end{abstract}

\maketitle


\section{Introduction}

Heisenberg geometry is a geometry on the plane given by all translations together with shears parallel to a fixed line.
Viewing this fixed line as `space' and any line intersecting it transversely as `time,' this is the geometry of $1+1$ dimensional Galilean relativity.

\begin{definition}
Heisenberg geometry is the $(G,X)$ geometry $\Hs^2:=(\Heis,\A^2)$ where
$$\Heis=\left\{\pmat{\pm1 & a & c\\0 &\pm1& b\\0&0&1}\;\Bigg | \; a,b,c\in\R\right\} \hspace{0.2cm}\textrm{and}
\hspace{0.2cm}
\A^2=\left\{ [x:y:1]\in\RP^2\mid x,y,\in\R\right\}.$$
The identity component $\Heis_0<\Heis$ is the real Heisenberg group, and the index 2 subgroup of orientation-preserving transformations is denoted $\Heis_+$.
\end{definition}

The Heisenberg plane represents a particularly simple example of a non-Riemannian degeneration of Riemannian symmetric spaces via conjugacy limit, as studied by Cooper, Danciger, Weinhard, Fillastre, Seppi and others \cite{Danciger11, CooperDW14,FillastreS16}.
The semi-Riemannian geometries with automorphism groups $\O(p,q)$ and their degenerations form a poset\footnote{The fact that spheres of increasing radius limit to their tangent plane can be used to produce a degeneration of spherical geometry to Euclidean showing that  $\E^2\prec\S^2$ for example.} with a minimum element in each dimension  \cite{CooperDW14}.
This `most degenerate' geometry has the property that no nontrivial orthogonal group of any dimension appears as a subgroup of its automorphisms, and in dimension two is the Heisenberg plane.

\begin{figure}
\includegraphics[width=\textwidth]{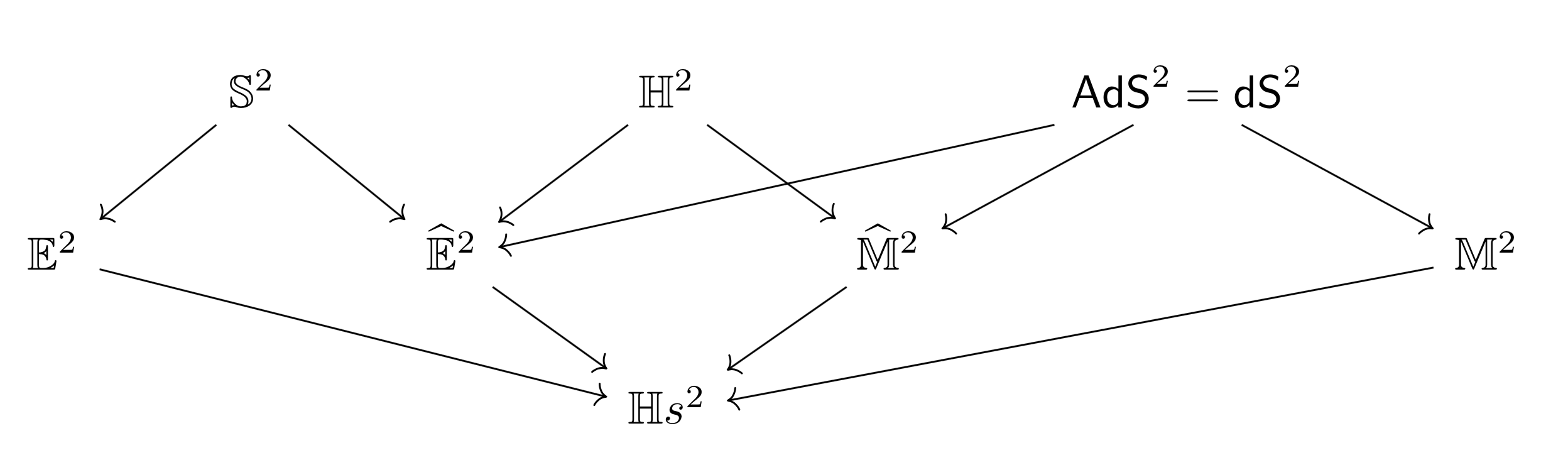}
\caption{The poset of subgeometries of $\RP^2$ with automorphism groups $\PO(p,q)$ (spherical, hyperbolic and (anti)-de Sitter space) and their degenerations (Adapted from \cite{CooperDW14}).
The first degenerations are geometries of Euclidean and Minkowski space together with their contragredient dual representations ($\widehat{\mathbb{M}}^2$ is the \emph{Half-Pipe} geometry of \cite{Danciger11}).
The Heisenberg plane is a degeneration of all of these.}
\end{figure}

This paper attempts to provide a detailed exploration of Heisenberg geometry, to add to the literature describing explicit geometric transitions.
We pay pay particular attention to aspects of interest to geometric topology; namely classifying Heisenberg orbifolds, calculating deformation their spaces and constructing regenerations of Heisenberg structures into familiar geometries.
In order to lower the prerequisites, when some result for the Heisenberg plane is a consequence of more general geometric theorems we mention this, but attempt to also provide self-contained proofs when possible and succinct.

\subsection{Heisenberg Orbifolds}

The first main result concerns the moduli problem for Heisenberg orbifolds.  
As a subgeometry of the affine plane, all Heisenberg orbifolds are finitely covered by a torus, so computing the deformation space $\mathcal{D}_{\Hs^2}(T^2)$ is the natural starting point.
Geometric structures on tori generalize elliptic curves (the conformal structures), especially in the presence of a compatible group operation.
As in the complete affine case studied by Goldman and Baues \cite{BauesGoldman05}, each Heisenberg torus admits a group structure with Heisenberg maps realizing the group operation, which we explicitly describe.
As a first step to determining these structures we compute the representation variety of potential holonomies.

\begin{theorem}
The representation variety $\Hom(\Z^2,\Heis_0)$ is isomorphic as a real algebraic variety to the product $V\times\R^2$, where $V$ is the 3-dimensional variety $V=\{(x,y,z,w)\in\R^4\mid xy=zw\}$.  
Topologically, this is homeomorphic to the product of a plane with the cone on a torus.
\end{theorem}

The Heisenberg plane admits no invariant Riemannian metric, so the possibility of incomplete structures must be taken seriously.
In contrast to the affine case \cite{Baues14,Nagano74} however, a geometric argument shows all Heisenberg structures are complete, and the deformation space $\mathcal{D}_{\Hs^2}(T^2)$ of tori identifies with the conjugacy classes of faithful representations $\Z^2\to\Heis_+$ acting properly discontinuously on $\R^2$.
The projection onto conjugacy classes admits a section allowing us to select a preferred holonomy (and construct the corresponding developing map) for each point in deformation space.

\begin{theorem}
All Heisenberg tori are complete, and the projection onto holonomy $\mathcal{D}_{\Hs^2}(T^2)\to \Heis(\Z^2,\Heis_+)/\Heis_+$ is an embedding.
The deformation space identifies with the classes of faithful representations acting properly discontinously, and is homeomorphic to $\R^3\times\S^1$.
\end{theorem}

An explicit description of the deformation space of tori greatly simplifies the calculation of the remaining deformation spaces, which is relegated to an appendix.
As all structures are complete, the problem of determining Heisenberg structures on an orbifold $\mathcal{O}$ finitely covered by $T^2$ is equivalent to the following algebraic \emph{extension problem}: when does a representation $\rho\colon \pi_1(T)^2\to\Heis$ extend to a representation of $\pi_1(\mathcal{O})>\pi_1(T^2)$?

\begin{theorem}
There are nine closed Heisenberg orbifolds, namely the quotients of the torus with at most order two cone points and right angled reflector corners.  All Heisenberg orbifolds are complete, and the holonomy map $\hol\colon\mathcal{D}_{\Hs^2}(\mathcal{O})\to\Hom(\pi_1(\mathcal{O}),\Heis)/\Heis_+$ is an embedding.

\begin{table}[h]
\centering
\begin{tabular}{ccc}
\toprule
$\mathcal{O}$ &\hspace{1cm}& $\mathcal{D}_{\Hs^2}(\mathcal{O})$ \\
\midrule
$\S^1\times\S^1$ &\hspace{1cm}& $\R^3\times\S^1$ 
\\
$\S^1\widetilde{\times}\S^1$, $\S^1\times I$, $\S^1\widetilde{\times}I$ &\hspace{1cm}& $\R^3\sqcup\R^2$  
\\
$\S^2(2,2,2,2)$ &\hspace{1cm}& $\R^2\times\S^1$ 
\\
$\D^2(2,2;\varnothing),\; \D^2(\varnothing; 2,2,2,2)$ &\hspace{1cm}& $\R^2\sqcup\R^2$
\\
$\RP^2(2,2)\;\D^2(2;2,2)$ &\hspace{1cm}& $\R^2\sqcup\R^2$
\\
\bottomrule
\end{tabular}
\caption{The Heisenberg orbifolds and the homeomorphism type of their deformation spaces. }
\end{table}

\end{theorem}

\vspace{-0.5cm}

\subsection{Regenerating Heisenberg Tori}

The second main result of this paper concerns the regeneration of Heisenberg structures to constant curvature ones, adding a detailed example to the collection of regenerations studied by Danciger, Gu\'eritaud, Kassel, Hodgson, Leitner, Porti, and others: for a selection of relevant works see \cite{Danciger11Ideal,DanGK16,Leitner16-Cusp, Porti98,PortiHS01}.
Understanding the behavior of geometric structures along a transition is in general difficult, as one cannot directly use techniques from either geometry involved.
Suitably constructing degenerations of $\S^2,\E^2$ and $\H^2$ to the Heisenberg plane within the projective plane allows us to use constructions in projective geometry to bridge the gap and overcome the additional difficulty posed by lack of invariant metric on $\Hs^2$.

As the Heisenberg plane is a common degeneration of the familiar constant curvature geometries, focusing on tori we ask when a given Heisenberg torus is the rescaled limit of a sequence of constant curvature conemanifold structures.
Restricting to structures with at most one cone point, this has a clean resolution illustrating a stark dichotomy between two `flavors' of Heisenberg tori;
\emph{translation tori} with holonomy images intersecting $\Heis_0$ only in translations, and \emph{shear tori} have holonomy image containing a nontrivial shear.
 
\begin{theorem}
Let $T$ be a Heisenberg torus, and $\mathbb{X}\in\{\S^2,\E^2,\H^2\}$.  Then if $\mathbb{X}_t$ is a sequence of conjugate models of $\mathbb{X}$ limiting to the Heisenberg plane within $\RP^2$, there is a sequence of $\mathbb{X}_t$-cone tori $T_t$ with a single cone point limiting to $T$ if and only if $T$ is a translation torus.
\end{theorem}

A constructive argument for the `if' direction builds for each translation torus $\R^2/(\Z \vec{v}\oplus \Z\vec{w})$ a fundamental domain $Q\subset \R^2$ and a sequence of collapsing $\mathbb{X}$ cone tori such that under rescaling $\mathbb{X}$ degenerates to $\Hs^2$ and the rescaled fundamental domains converge to $Q$.
This construction is analogous to the regeneration of Euclidean tori as hyperbolic cone tori.
The `only if' direction follows from a geometric characterization of Heisenberg tori, relating shears in the image of the holonomy homomorphism to the distribution of simple geodesics on the surface.

\begin{theorem}
A Heisenberg orbifold $\mathcal{O}$ has a nontrivial shear in its holonomy if and only if all simple geodesics on $\mathcal{O}$ are parallel.
\end{theorem}

This provides a clear obstruction to regenerating shear tori.  
Any two simple geodesics on a shear torus are disjoint, but constant curvature cone tori with a single cone point have geodesic representatives of each homotopy class.
In particular, any generating set for $H_1(T^2)$ can be pulled tight to give intersecting simple geodesics.
An argument in projective geometry shows that any limit of $\mathbb{X}\in\{\S^2,\H^2,\E^2\}$-cone tori as $\RP^2$ structures inherits a collection of intersecting simple geodesics, finishing the proof.

\section*{Acknowledgements}
I am immensely grateful to Darren Long, my advisor, and Daryl Cooper for many helpful discussions.  I have learned a great deal from their helpful suggestions, and appreciate their  patience as I worked on this manuscript.  Thanks also to Gordon Kirby for listening to me develop these ideas, and much thanks to the anonymous reviewer whose detailed and constructive feedback substantially improved this paper.

\section{Background}

Following is a list of terminology and notations used throughout the paper for quick reference. 
We denote by $\Heis_0$ the real Heisenberg group of upper triangular unipotent $3\times 3$ matrices, $\Heis=(\Z_2)^2\rtimes\Heis_0$ the group generated by this together with reflections $\diag(\pm 1, \pm 1, 1)$.
 $\Heis_+$ is the index two orientation preserving subgroup of $\Heis$, and $\mathsf{Tr}$ the subgroup acting by translations on the plane.
  The Lie algebra $\heis$ consists of the strictly upper triangular $3\times 3$ matrices, and provides useful coordinates for the representation varieties.
For ease of inline typesetting we will often denote the element $\smat{0& x& z\\0&0&y\\0&0&0}\in\heis$ by the shorthand notation $\smat{x&z\\&y}$.

We denote a closed two dimensional orbifold $\mathcal{O}$ with underlying topological space $X$ by $X(\vec{c})$ if $\mathcal{O}$ has cone points of order $\vec{c}=(c_1,\ldots, c_m)$ and by $X(\vec{c};\vec{r})$ if in addition $\partial X\neq \varnothing$ and $\mathcal{O}$ has corner reflectors of order $\vec{r}=(r_1,\ldots, r_n)$.

The algebraic variety cut out by $f\in \R[x_1,\ldots x_n]$ is denoted $V(f)$.
A finite presentation for a group $\Gamma=\langle s_1,\ldots s_n|r_1,\ldots r_m\rangle$ gives an injection $\mathsf{ev}\colon\Hom(\Gamma, \Heis)\inject\Heis^n$ by evaluation on generators; $\mathsf{ev}(\rho)=(\rho(s_1),\ldots \rho(s_n))$.
The image is an algebraic variety cut out by the polynomials $\{r_i-I\}$. 
Pulling this structure back via the evaluation map equips the set of homomorphisms with a variety structure, which is independent of original choice of presentation (see for example \cite{Goldman88}).

\subsection{Klein Geometry}

A \emph{geometry} in the sense of Klein is a pair $(G,X)$ consisting of a Lie group $G$ 
acting analytically and transitively on a smooth manifold $X$.
Examples of Kleinian geometries abound in geometric topology,
from spherical geometry as the sphere with an $\SO(3)$ action to the hyperbolic plane as a disk in $\C$ together with the M\"obius transformations preserving it, and even non-Riemannian examples such as  projective space $(\SL(n+1;\R),\RP^n)$.
Consult \cite{Thurston80,Goldman10,CooperDW14} for additional reference and examples.

For convenience we often work with pointed geometries $(G,(X,x))$ selecting a particular point stabilizer $G_x=\mathsf{stab}_G(x)$.
As $G$ acts transitively, the particular choice of basepoint is immaterial and often notationally suppressed. 
A \emph{morphism of geometries} $(G,X)\to(H,Y)$ is a pair $(\Phi, F)$ consisting of a group homomorphism $\Phi\colon G\to H$ with $\Phi(G_x)<H_y$
 together with a $\Phi$-equivariant smooth map $F\colon (X,x)\to (Y,y)$.
A \emph{subgeometry} of $(G,X)$ is the image of a monomorphism $(H,Y)\hookrightarrow (G,X)$; namely, a subset $Y\subset X$ together with a subgroup $H<G$ preserving and acting transitively on $Y$.
An \emph{open subgeometry} is a subgeometry with $Y\subset X$ open.
One may alternatively build the theory of Klein geometries abstractly as pairs $(G,G_x)$ of a Lie group and closed subgroup, recovering the space $X$ as $X=G/G_x$ with basepoint $G_x$.
This \emph{automorphism-stabilizer} perspective is equivalent to the \emph{group-space} definitions above, with the map $(G,(X,x))\mapsto (G,G_x)$ defining an equivalence of categories.

A geometry is \emph{effective} if the only automorphism acting trivially is the identity.
The failure to be effective is measured by the intersection of all point stabilizers $K_G=\bigcap_{x\in X}\mathsf{stab}_G(x)$, and a geometry is \emph{locally effective} if $K_G$ is discrete.
The assignment $\mathsf{E}\colon(G,X)\mapsto (G/K_G, X)$ induces an equivalence of categories onto the subcategory of effective geometries; we say two geometries are \emph{effectively equivalent} if their images under $\mathsf{E}(\cdot)$ are isomorphic.
As commonplace, we switch between effectively equivalent geometries when convenient.

\subsection{Geometric Structures \& Collapse}

A $(G,X)$ structure on a manifold $M$ is defined by a maximal atlas of $X$-valued charts on $M$ with transition maps in $G$.
The set of such structures is denoted $\mathcal{S}_{(G,X)}(M)$.
Pulling an atlas back to the universal cover and analytically continuing a chosen base chart provides an alternative definition via a \emph{developing pair}: an immersion $f\colon \widetilde{M}\to X$ called the developing map, equivariant with respect to the holonomy homomorphism $\rho\colon \pi_1(M)\to G$.
A $(G,X)$ structure on $M$ only determines such a developing pair up to the action of $G$ by $g.(f,\rho)=(g.f,g\rho g\inv)$, identifying $\mathcal{S}_{(G,X)}(M)$ with the set of $G$ orbits of developing pairs under this action.
$\mathcal{S}_{(G,X)}(M)$ inherits the quotient topology from the space of developing pairs topologized by uniform convergence on compact sets.
The \emph{deformation space} of $(G,X)$ structures $\mathcal{D}_{(G,X)}(M)$ is the result of further identifying isotopy classes of structures.
More precisely, let $\mathsf{Diff}_0(M)$ denote the diffeomorphisms of $M$ isotopic to the identity, and $\widetilde{\mathsf{Diff}}_0(M)$ their lifts to $\pi_1(M)$-equivariant maps $\widetilde{M}\to\widetilde{M}$.
Then $\mathcal{D}_{(G,X)}(M)$ is the quotient of $\mathcal{S}_{(G,X)}(M)$ by the action of $\widetilde{\mathsf{Diff}}_0(M)$ by precomposition on the developing map factor.
For further reference, more detailed accounts of deformation space can be found in \cite{Goldman10,Goldman88,Baues14}.
A subgeometry $(H,Y)<(G,X)$ induces a map $\mathcal{D}_{(H,Y)}(M)\to \mathcal{D}_{(G,X)}(M)$ by viewing a $(H,Y)$ structure up to $(G,X)$ equivalence, called \emph{weakening}.  Note this map is rarely injective; for example weakening Euclidean to affine structures collapses the entirety of $\mathcal{D}_{\E^2}(T^2)$ to a point.
Dually, a developing pair for a $(G,X)$ structure with holonomy image in $(H,Y)$ can be \emph{strengthened} to an $(H,Y)$ structure by only considering equivalence up to $H$-conjugacy.

A sequence of geometric structures \emph{degenerates} if the developing maps fail to converge to an immersion even after adjusting by diffeomorphisms of $M$ and coordinate changes in $G$.
Of particular interest are \emph{collapsing degenerations}: with developing maps converging to a submersion into a lower-dimensional submanifold and holonomies limiting to a representation into the subgroup preserving this submanifold.
A trivial example is given by the collapse of Euclidean manifolds under volume rescaling.  Given a Euclidean structure $(f,\rho)$ on a manifold $M^n$ and any $r\in\R_+$, the developing pair $(rf, r\rho)$ describes the rescaled manifold with volume $r^n$ times that of the original.  As $r\to 0$ these structures collapse to a constant map and the trivial holonomy.
More interesting examples include the collapse of hyperbolic structures onto a codimension-1 hyperbolic space as studied by Danciger \cite{Danciger11,Danciger11Ideal,Danciger13} and the collapse of hyperbolic and spherical structures in \cite{Porti10,Porti98}.

Collapsing geometric structures can often be `saved' by allowing more flexible coordinate changes.
If a geometry $(H,Y)$ can be realized as an open subgeometry of $(G,X)$ then a sequence $(f_n,\rho_n)$ of collapsing $(H,Y)$ structures may actually converge \emph{as $(G,X)$ structures}, meaning there are $g_n\in G$ such that the developing pairs $g_n.(f_n,\rho_n)$ converge to a $(G,X)$ developing pair $(f_\infty, \rho_\infty)$.
When $f_\infty$ has image in an open subset $Z\subset X$ and $\rho_\infty$ maps into the subgroup $L<G$ of $Z$-preserving transformations, this $(G,X)$ developing pair strengthens to an $(L,Z)$ structure.
It is tempting to say that \emph{within} $(G,X)$ these $(H,Y)$ structures converge to an $(L,Z)$ structure.
Formalizing this notion motivates the field of \emph{transitional geometry}.

\subsection{Geometric Transitions}

A \emph{geometric transition} is a continuous path of geometries $(H_t,Y_t)$ each isomorphic to a fixed geometry $(H,Y)$, which converge to a geometry $(L,Z)\not\cong (H,Y)$.
This is difficult to define in full generality, but for our purposes it suffices to formalize geometric transitions occurring \emph{as subgeometries of a fixed ambient geometry}.
Subgeometries $(H,H_x)$ of $(G,G_x)$ correspond directly to closed subgroups $H<G$ (with $H_x=H\cap G_x$), providing a natural topology on the space of subgeometries of $(G,X)$.
The hyperspace $\mathfrak{C}_G$ of closed subgroups of a compact Lie group $G$ admits the Hausdorff metric inducing a topology in which $\{Z_n\}$ converges to the set the set of all sub-sequential limits of sequences $\{z_n\}\in Z_n$. 
This generalizes to all Lie groups $G$ by equipping $\mathfrak{C}_G$ with the topology of Hausdorff convergence on compact sets, otherwise known as the Chabauty topology \cite{Chabauty50}.

\begin{definition}
Given a geometry $(G,(X,x))$, the space of open subgeometries $\mathfrak{S}_{(G,X)}$ is defined by  $\mathfrak{S}_{(G,X)}=\{(H,H\cap G_x) \mid H<G \;\&\; \dim H-\dim(H\cap G_x)=\dim G-\dim G_x \}$ equipped with the subspace topology from $\mathfrak{C}_G\times \mathfrak{C}_{G_x}$.
\end{definition}

\begin{definition}
A \emph{continuous path of subgeometries} of $(G,X)$ is a continuous map $I\to \mathfrak{S}_{(G,X)}$.  
A geometry $(L,Z)$ is a \emph{degeneration of $(H,Y)$ in $(G,X)$} if there is a continuous path $\gamma\colon[0,1]\to\mathfrak{S}_{(G,X)}$ with $\gamma(t)\cong (H,Y)$ for $t\neq 0$ and $\gamma(0)\cong (L,Z)$.
A geometry $(L,Z)$ is a \emph{transitional geometry from $(H,Y)$ to $(H',Y')$ in $(G,X)$} if it is a degeneration of both $(H,Y)$ and $(H',Y')$.	
\end{definition}

The automorphisms $G$ of the ambient geometry act on the space of subgeometries by $g.(H,Y)=(gHg\inv, g.Y)$.
A degeneration which occurs as the limit of a sequence $g_t.(H,Y)$ for $g_t\in G$ is called a \emph{conjugacy limit} of $(H,Y)$ in $(G,X)$.
This provides the necessary background to formally consider the degeneration and regeneration of geometric structures.

\begin{definition}
Fix an ambient geometry $(G,X)$ and a subgeometry $(H,Y)$.  Then a collapsing sequence of $(H,Y)$ structures $(f_t,\rho_t)$ on a manifold $M$ \emph{degenerates to an $(L,Z)$ structure} if there is a path $g_t\in G$ with $g_t.(H,Y)\to (L,Z)$ such that $g_t.(f_t,\rho_t)$ converges as developing pairs.
Dually, an $(L,Z)$ structure on $M$ is said to \emph{regenerate into $(H,Y)$} if there such a collapsing path of $(H,Y)$ structures exists.
\end{definition}

Danciger develops \emph{half-pipe geometry} \cite{Danciger11} as half-pipe structures are the limits of the aforementioned collapse of hyperbolic conemanifolds onto codimension-1 hyperbolic space, and together with Gu\'eritaud and Kassel studies regenerations of $\mathsf{AdS}$ spacetimes from flat spacetimes \cite{DanGK16}.
Hodgson \cite{Hodgson86} and Porti 
\cite{Porti98} analyze Euclidean limits resulting from hyperbolic conemanifolds collapsing to a point, which plays an important role in the Orbifold Theorem of Cooper, Hodgson, \& Kerckhoff \cite{CooperHK00} and Boileau, Leeb \& Porti \cite{PortiLB05}.
Further work of Porti studies the nonuniform collapse of hyperbolic structures and regenerations of Nil \cite{Porti03} and Sol \cite{PortiHS01}, and the work of Ballas, Cooper \& Leitner concerns the degeneration of cusps in projective space \cite{Leitner16-Cusp,BallasCL17}.

\subsection{An Example: The Spherical-to-Hyperbolic Transition}

As a final installment of introductory material, we introduce models of the constant curvature geometies $\S^2,\E^2$ and $\H^2$ as subgeometries of projective space, and then construct a geometric transition from spherical to hyperbolic space via conjugacy limit.

\begin{definition}
As subgeometries of projective space, the constant curvature geometries are realized by the following three models.
\begin{itemize}
\item $\mathbb{S}^2=(\SO(3),\RP^2)$.  
This twofold quotient of the unit sphere is often called the \emph{elliptic plane} in older literature.
\item 	$\mathbb{E}^2=(\Euc(2),\A^2)$ with $\Euc(2)=\smat{\SO(2)&\R^2\\0&1}$ the Euclidean group acting transitively on the affine patch $\A^2=\{[x:y:1]\}\subset\RP^2$.
\item $\mathbb{H}^2=(\SO(2,1),\D^2)$ with $\D^2=\{[x:y:1]\mid x^2+y^2<1\}$ the unit disk in the affine patch $\A^2$.
\end{itemize}

Note the projective point $p=[0:0:1]$ lies in each of the above models, and the stabilizing subgroup of $p$ is equal in all three geometries to $S=\smat{\SO(2)&0\\0&1}$.
\end{definition}

Often the underlying spaces of these geometries will often be denoted $\S^2,\E^2$ and $\H^2$ as well to remind us of the inherent geometric structure.
On the level of curvature one can easily imagine producing a transition from (a small patch of) spherical space to (a small patch of) hyperbolic space through Euclidean geometry by appropriately varying the Riemannian metric.
Below we give an example realizing this transition as a \emph{conjugacy limit} connecting the three specific models above within an ambient copy of $\RP^2$.

From the group stabilizer perspective, the models above are given by the points $\S^2=(\SO(3),S),\E^2=(\Euc(2),S)$ and $\H^2=(\SO(2,1),S)$ in the space $\mathfrak{S}_{\RP^2}$ of subgeometries of the projective plane.
Let $C_t=\diag(1,1,t)$ and define the following path $\gamma\colon[-1,1]\to\mathfrak{C}_{\GL(3;\R)}$:

$$
\gamma(t)=
\begin{cases}
C_t.(\SO(2,1),S) & t<0	\\
(\Euc(2),S) & t=0\\
C_t.(\SO(3),S) & t>0
\end{cases}
$$

The point stabilizer subgroup $S$ is invariant under $C_t$ conjugacy; thus checking the continuity of $\gamma$ reduces to considering the limits of $C_t.\SO(3)$ and $C_t.\SO(2,1)$ in $\mathfrak{C}_{\GL(3;\R)}$.
The fact that each of these paths has limit $\Euc(2)$ as $t\to 0$ is a straightforward computation in the Lie algebra, a reduction which is justified by \cite{CooperDW14} Proposition 3.1 as both are conjugacy limits of algebraic groups.
Thus $\gamma$ realizes a continuous transition as subgeometries of $\RP^2$ from $\gamma(-1)=\H^2$ to $\gamma(1)=\S^2$ through $\gamma(0)=\E^2$.

\section{Heisenberg Geometry}

The Heisenberg plane is not a metric geometry but supports other familiar geometric quantities.
The standard area form $dA=dx\wedge dy$ on $\R^2$ is invariant under the action of $\Heis_+$, furnishing $\Hs^2$ with a well-defined notion of area.
The one form $dy$ is $\Heis_0$ invariant, and induces a $\Heis$-invariant foliation of $\Hs^2$ by horizontal lines together with a transverse measure.  
As a subgeometry of the affine plane, $\Hs^2$ inherits an affine connection and notion of geodesic. A curve $\gamma$ is a geodesic if $\gamma''=0$, tracing out a constant speed straight line in $\Hs^2$.

Heisenberg geometry arises as a limit of the constant curvature spaces $\S^2,\H^2$ and $\E^2$ by `zooming into while unequally stretching' a projective model.
Details can be reconstructed from \cite{CooperDW14}, and the precise characterization is reviewed in Section 4.
Here we briefly explore one degeneration of hyperbolic space to the Heisenberg plane as subgeometries of $\RP^2$.
Acting on $\H^2\in\mathfrak{S}_{\RP^2}$ by the path $A_t=\diag(t^2,t,1)$ results in a path of subgeometries $A_t\H^2$ isomorphic to the hyperbolic plane with underlying space the origin-centered ellipsoid in $\A^2$ with semimajor,semiminor axes of lengths $t^2,t$ parallel to the $x,y$ axes respectively.
As $t$ tends to infinity, the limit of these domains is $\A^2$ and the groups $A_t\O(2,1)A_t\inv$ limit to $\Heis$.
The aforementioned invariant foliation on $\Hs^2$ is a remnant of this stretching, and is parallel to the limiting direction of the major axes of $A_t\H^2$.

Unlike the degeneration of $\S^2$ and $\H^2$ to Euclidean space, the uneven stretching required to produce a Heisenberg limit distorts even the point stabilizer subgroups, which become noncompact in the limit.
Conjugation by $A_t$ stretches the circle $S=\smat{\SO(2)&0\\0&1}\subset\mathsf{M}(3;\R)$ into ellipses of increasing eccentricity limiting to the parallel lines $\smat{1&\pm x \\0&1}$ in the upper $2\times 2$ block.
As a consequence, the role of the unit tangent bundle in the constant curvature geometries is replaced for the Heisenberg plane by an appropriate space of based lines.
Indeed let $\mathcal{L}=\mathbb{P}\mathsf{T}(\Hs^2)$ be the space of pointed lines in the Heisenberg plane, and $\mathcal{H}\subset\mathcal{L}$ those belonging to the invariant horizontal foliation.  
The action of $\Heis_0$ on the plane extends to a simple transitive action on $\mathcal{L}\smallsetminus\mathcal{H}$, analogous to the action of $\Isom(\mathbb{X})$ on the unit tangent bundle $\mathsf{UT}(\mathbb{X})$ for $\mathbb{X}\in\{\H^2,\E^2,\S^2\}$.
The noncompactness of point stabilizers is sufficient to preclude  an invariant Riemannian metric, but moreover the existence of shears in the automorphism group of $\Heis$ forces any continuous $\Heis$-invariant map $d:\R^2\times\R^2\to\R$ to be constant along the lines $\{x\}\times\R$ in both factors of the domain, so there are no continuous $\Heis$-invariant distance functions at all.

\subsection{Heisenberg Structures on Orbifolds}

As a subgeometry of the affine plane, every Heisenberg structure on an orbifold $\mathcal{O}$ canonically weakens to an affine structure.
This provides strong restrictions on which orbifolds can possibly admit Heisenberg structures. It follows from a result of Benzecri that closed affine orbifolds have Euler characteristic zero \cite{Benz}; an additional self contained proof appears in \cite{Baues14}.
The deformation space of affine tori has been computed \cite{Baues14}, and weakening Heisenberg structures to affine structures provides a (non-injective) map $\omega\colon\mathcal{D}_{\Hs^2}(T^2)\to\mathcal{D}_{\A^2}(T^2)$.
Each Heisenberg orbifold inherits an area form from $\Hs^2$ and has a well defined finite total area.  The group $\R_+$ of homotheties of the plane acts on $\mathcal{D}_{\Hs^2}(\mathcal{O})$ sending an orbifold $\mathcal{O}$ with total area $\alpha$ to an orbifold $r.\mathcal{O}$ with area $r^2\alpha$, allowing
 the deformation space to be easily recovered from the space of unit area structures.
 
 \begin{observation}
 \label{obs:Area}
 The action of $\R_+$ by homotheties on the plane induces an action on $\mathcal{D}_{\Hs^2}(\mathcal{O})$ defined by $r.[f,\rho]=[rf,r\rho]$.  
 This gives a homeomorphism $\mathcal{D}_{\Hs^2}(\mathcal{O})=\R_+\times\mathcal{T}_{\Hs^2}(\mathcal{O})$ for $\mathcal{T}_{\Hs^2}(\mathcal{O})$ the subspace of unit area structures, analogous to the Techim\"uller space for Euclidean tori.
 \end{observation}

As $dy$ is invariant under the action of $\Heis_0$, any Heisenberg surface with holonomy into $\Heis_0$ inherits a closed nondegenerate 1-form and corresponding foliation.  
This observation leads to a self-contained proof that every Heisenberg orbifold has vanishing Euler characteristic, simple enough that we include it for completeness.

\begin{proposition}
Every closed Heisenberg orbifold is finitely covered by a torus with holonomy in $\Heis_0$.
\end{proposition}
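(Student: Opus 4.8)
The plan is to establish the result in two stages: first, a topological argument that every closed Heisenberg orbifold has zero Euler characteristic (as an orbifold) and is covered by a torus; second, a refinement showing the torus cover can be taken with holonomy landing in the identity component $\Heis_0$.

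For the first stage, I would use the closed nondegenerate $1$-form $\omega$ obtained by pulling back $dy$, available whenever the holonomy lands in $\Heis_0$. In general the holonomy $\rho\colon\pi_1(\mathcal{O})\to\Heis$ need not land in $\Heis_0$, but $\Heis_0$ has index (at most) $4$ in $\Heis$ — the component group is generated by the two sign choices $\pm 1$ on the diagonal. So $\rho\inv(\Heis_0)$ has finite index in $\pi_1(\mathcal{O})$, and passing to the corresponding finite cover $\widehat{\mathcal{O}}\to\mathcal{O}$ produces a closed Heisenberg orbifold with holonomy in $\Heis_0$, hence carrying a closed nonvanishing $1$-form $\omega$. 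Now I would argue that a closed orbifold carrying such a form has a manifold finite cover: $\omega$ is invariant under the orbifold structure, so near a cone point of order $k$ the local group acts on a disk preserving a nonvanishing $1$-form, forcing the action to be trivial (a nontrivial rotation has a fixed point where the pushed-forward form would have to be rotation-invariant and nonzero, which is impossible) — similarly no reflector lines or corners can occur. Therefore $\widehat{\mathcal{O}}$ is already a closed surface (or, if one does not want to invoke orbifold-specific structure, one can cite that vanishing orbifold Euler characteristic plus the form forces this). A closed surface carrying a nonvanishing closed $1$-form has $\chi=0$, so it is the torus or the Klein bottle; passing to the orientation double cover if needed yields a torus cover $T\to\mathcal{O}$.

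For the second stage I must ensure the torus cover's holonomy lands in $\Heis_0$. This is immediate from the construction above: the cover $\widehat{\mathcal{O}}$ was chosen precisely so that $\rho(\pi_1(\widehat{\mathcal{O}}))\subseteq\Heis_0$, and any further cover (orientation double cover of a Klein bottle, or a cover trivializing remaining orbifold locus) only restricts the holonomy further, so the final torus $T$ satisfies $\rho(\pi_1(T))\subseteq\Heis_0$. One should double-check that the Klein bottle case cannot give a torus whose holonomy escapes $\Heis_0$ — but again, passing to a subgroup of $\pi_1$ cannot enlarge the image, so this is automatic.

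The main obstacle is the claim that a nonvanishing closed $1$-form rules out orbifold singularities (and, relatedly, handling the non-orientable and reflector cases cleanly). The cone-point case is the crux: I would make it precise by noting that at a cone point the developing map sends a neighborhood to $\Hs^2$ equivariantly under a finite cyclic subgroup $C\subset\Heis_0$, but $\Heis_0$ is a simply connected nilpotent Lie group and hence \emph{torsion-free}, so $C$ is trivial — this is cleaner than the differential-form argument and disposes of cone points outright. The same torsion-freeness kills reflector corners of order two (whose local groups are finite dihedral, hence contain torsion) once we are in $\Heis_0$; reflector lines correspond to order-two elements as well. So in fact, once the holonomy is in $\Heis_0$, torsion-freeness of $\Heis_0$ forces the orbifold to be a manifold, and then the closed nonvanishing $1$-form $\omega$ forces $\chi=0$, giving the torus or Klein bottle, and finally the orientation cover gives the torus. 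I would present the torsion argument as the main point and relegate the Euler characteristic computation to a remark citing \cite{e60} or \cite{b14}.
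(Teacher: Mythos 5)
Your proposal is correct and reaches the paper's conclusion by the same basic mechanism (pull back the $\Heis_0$-invariant form $dy$ to get a nonvanishing $1$-form, hence $\chi=0$; use that $\Heis_0$ preserves orientation to exclude the Klein bottle), but it justifies the key step --- the existence of a manifold cover --- differently. The paper simply cites that every geometric $2$-orbifold is very good, takes a finite manifold cover first, and only then passes to a further cover with holonomy in $\Heis_0$. You reverse the order: pass to the index-$\le 4$ cover corresponding to $\rho\inv(\Heis_0)$ and then argue that this cover has no singular locus because $\Heis_0$, being a simply connected nilpotent Lie group, is torsion-free, so no nontrivial finite local group can act. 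This is a genuinely more self-contained route tailored to Heisenberg geometry, and it is a nice observation; its one hidden dependency is that the local group at a singular point embeds into $G$ via the holonomy (equivalently, that geometric orbifolds are developable), which is exactly the kind of fact the paper's appeal to ``very good'' is standing in for --- you should state it rather than leave it implicit. Two smaller remarks: your first-paragraph attempt to exclude singular points purely from the invariant $1$-form does not work at reflector points (a reflection fixes a nonzero covector, so $dy$ survives under $\diag\{-1,1,1\}$), but you correctly discard that argument in favor of torsion-freeness, which handles cone points, reflector lines, and corners uniformly; and the final passage to an orientation double cover is unnecessary, since holonomy in $\Heis_0$ already forces orientability, as the paper notes.
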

\begin{proof}
Let $\mathcal{O}$ be a Heisenberg orbifold, with developing map $f\colon\widetilde{\mathcal{O}}\to\Hs^2$ and holonomy $\rho\colon\pi_1(\mathcal{O})\to\Heis$.
As $f$ immerses $\widetilde{\mathcal{O}}$ in the plane it has no singular locus; thus $\widetilde{\mathcal{O}}$ is a manifold and $\mathcal{O}$ is good.
By the classification of two dimensional orbifolds then, $\mathcal{O}$ is not the spindle or teardrop, and is finitely covered by some surface $\Sigma\to\mathcal{O}$.
The Heisenberg structure on $\mathcal{O}$ pulls back to $\Sigma$ with developing pair $(f,\rho|_{\pi_1(\Sigma)})$.
Passing to an at most 4-sheeted cover, we may assume the holonomy of $\Sigma$ takes values in $\Heis_0$.
Thus $\Sigma$ inherits a nondegenerate 1-form $\omega\in\Omega^1(\Sigma)$ from $dy$ on $\Hs^2$.  Choose a Riemannian metric $g$ on $\Sigma$.  
Then $\omega$ defines a non-vanishing vector field $X_\omega$ by $\omega(\cdot)=g(X_\omega,\cdot)$, and so $\chi(\Sigma)=0$.  As $\Heis_0$ acts by orientation preserving transformations, $\Sigma$ is a torus.
\end{proof}

Thus Heisenberg tori with holonomy in $\Heis_0$ play a fundamental role to the classification of Heisenberg orbifolds, and it is natural to study them first.
By the previous observation, in particular it suffices to study the Teichm\"uller space of unit area structures, whose holonomy are determined up to conjugacy and homotheties of the plane.

\subsection{Representations of $\Z^2$ into $\Heis$}

To classify tori with holonomy into $\Heis_0$ we compute the representation variety $\mathcal{R}=\Hom(\Z^2,\Heis_0)$.
The quotients of $\mathcal{R}$ by homothety and Heisenberg conjugacy are denoted $\mathcal{H}=\mathcal{R}/\R_+$ and $\mathcal{X}=\mathcal{R}/\Heis_0$ respectively.
The holonomies of unit area structures lie in the double quotient $\mathcal{U}=\mathcal{X}/\R_+\cong\mathcal{H}/\Heis_0$.
Representations into the center of $\Heis_0$ act by collinear translations on $\Hs^2$, and a simple argument of section 3.3 precludes these from being the holonomy of any Heisenberg structure.
Thus, we are primarily concerned with the subset $\mathcal{R}^\star\subset\mathcal{R}$ of representations not into the center, and its quotients $\mathcal{X}^\star\subset\mathcal{X},\mathcal{H}^\star\subset\mathcal{H}$ and $\mathcal{U}^\star\subset\mathcal{U}$.
Explicitly dealing with these representation spaces is easiest using coordinates from the Lie algebra, introduced below.

\begin{proposition}
The map $\log\colon \Heis_0\to\heis$ induces an isomorphism of varieties \\
$\Hom(\Z^2,\Heis_0)\cong\Hom(\R^2,\heis)$.
\end{proposition}
\begin{proof}
Both $\Heis_0$ and $\heis$ inherit their structure as algebraic varieties from their inclusion in the affine space $\mathsf{M}(3,\R)$ of $3\times 3$ real matrices.
As $\heis$ is nilpotent, the power series $\exp\colon\heis\to\Heis_0$ terminates, and thus is algebraic.
Indeed, $\exp$ is an isomorphism of varieties with polynomial inverse $\log\colon\Heis_0\to\heis$.
Recall that evaluation on the generators $e_1,e_2\in\Z^2\subset\R^2$ identifies the collections of representations with subvarieties of $\Heis_0\times\Heis_0$, $\heis\times\heis$ respectively.
Applying the exponential/logarithm coordinatewise provides the required algebraic isomorphism $\Hom(\Z^2,\Heis_0)\cong\Hom(\R^2,\heis)$.

\begin{center}
\includegraphics[width=0.5\textwidth]{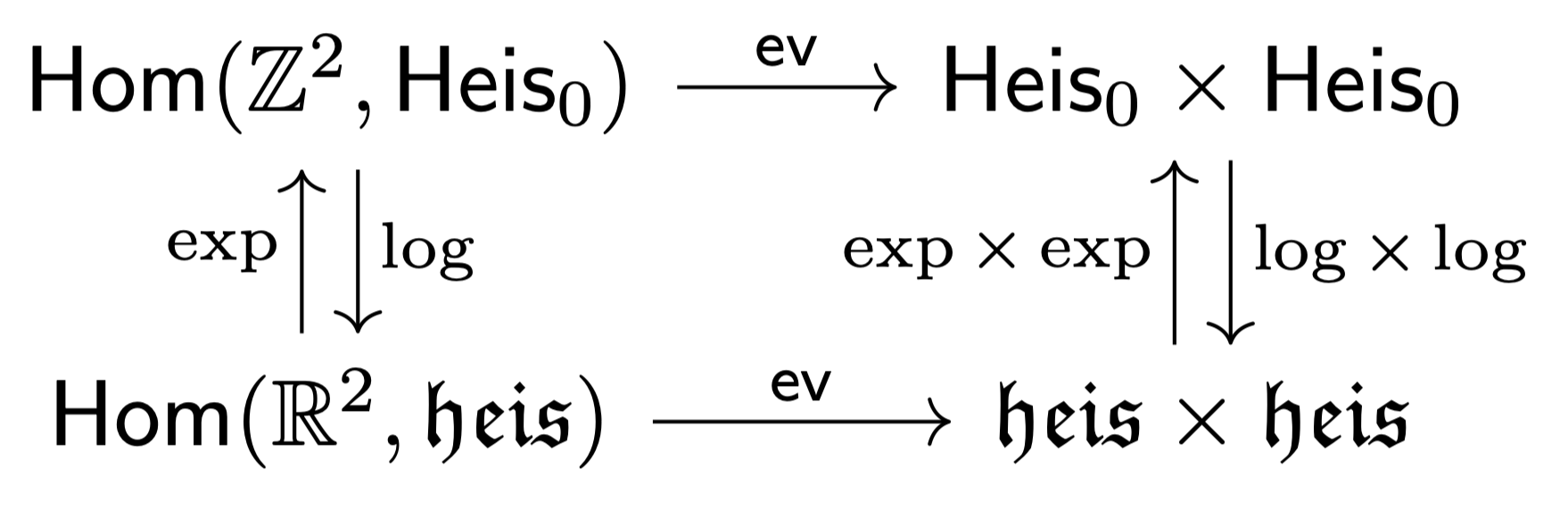}
\end{center}

\end{proof}

We continue to denote the induced isomorphisms $\mathcal{R}\cong\Hom(\R^2,\heis)$ by $\exp$ and $\log$,
and call the vector $(\vec{x},\vec{y},\vec{z})\in\R^6$ the \emph{Lie algebra coordinates} for the representation $\rho\in\mathcal{R}$ 
when $\mathsf{ev}(\log\rho)=\left(\smat{x_1 & z_1\\&y_1},\smat{x_2 & z_2\\&y_2}\right)$.

\begin{proposition}
$\mathcal{R}$ is isomorphic to $V(x_1y_2-x_2y_1)\times\R^2$. 
\end{proposition}
\begin{proof}
Evaluation on the generators identifies the representation variety $\Hom(\R^2,\heis)$ with the kernel of the Lie bracket $[\cdot,\cdot]\colon\heis^2\to\heis$.	
Indeed $\left[\smat{x_1&z_1\\&y_1},\smat{x_2&z_2\\&y_2}\right]=\smat{0 & x_1y_2-x_2y_1\\&0}$,
so $\ker[\cdot,\cdot]$ is cut out precisely by $x_1y_2=x_2y_1$ in $\heis^2$ and $(\vec{x},\vec{y},\vec{z})\in\R^6$ is the Lie algebra coordinates of a representation $\rho\in\mathcal{R}$ if and only if $(\vec{x},\vec{y})\in V(x_1y_2-x_2y_1)$ and $(z_1,z_2)\in\R^2$.
\end{proof}

\begin{proposition}
The space $\mathcal{H}^\star=\mathcal{R}^\star/\R_+$ of representations modulo homothety with image not contained in the center of $\Heis$ is homeomorphic to $\R^2\times T^2$.	
\end{proposition}
\begin{proof}
Denote by $\R^2_{(\vec{x},\vec{y})}$ the fiber above $(\vec{x},\vec{y})$ under the projection $(\vec{x},\vec{y},\vec{z})\mapsto(\vec{x},\vec{y})$.
The hypersurface $V=V(x_1y_2-x_2y_1)$ has one singularity at $0$, above which $\R^2_{(0,0)}$ consists of the representations into the center.
Homotheties of $\Hs^2$ induce the $\R_+$ action $t.(\vec{x},\vec{y},\vec{z})=(t\vec{x},t\vec{y},t\vec{z})$ on $\mathcal{R}$; thus $V\subset\R^4$ is a cone and $\mathcal{H}^\star$ identifies with the product of $\R^2$ with the intersection $V\cap\S^3$.
The change of coordinates on $\R^4$ given by $(x_1,x_2,y_1,y_2)=(u_1+v_1,v_2+u_2,v_2-u_2,u_1-v_1)$ provides an isomorphism $V\cong V(u_1^2+u_2^2-v_1^2-v_2^2)$ 
identifying $V\cap \S^3$ with the Clifford torus $T=\{(u,v)\in\mathbb{C}^2\colon\,\|\vec{u}\|=\|\vec{v}\|=1/\sqrt{2}\}$, verifying the claim.
\end{proof}

\begin{corollary}
The section of $\mathcal{R}^\star\to\mathcal{H}^\star$ sending each homothety class $[\rho]_{\R_+}=[(\vec{x},\vec{y},\vec{z})]_{\R_+}$ to the representative with $(\vec{x},\vec{y})\in T^2\subset \S^3$ is a diffeomorphism of $\mathcal{H}^\star$ onto its image.  We use this to identify $\mathcal{H}^\star$ with the algebraic variety
$V(x_2y_1-x_1y_2,\|x\|^2+\|y\|^2-1)\subset\R^6$.
\end{corollary}

We have identified the space $\mathcal{R}$ of all representations as a product $\R^2\times V$ of a plane with a cone on the torus, with representations into the center parameterized by the plane above the cone point of $V$.
Restricting to representations \emph{not} into the center, it proves useful to remove this cone point, and consider the space $V\smallsetminus\{0\}\cong \R_+\times T^2$, which we denote $V^\star$ to remain consistent with other notations.

\begin{proposition}

Let $\mathcal{X}^\star$ be the conjugacy quotient $\mathcal{X}^\star=\mathcal{R}^\star/\Heis_0$.
Then the function $\pi\colon \mathcal{X}^\star\to V^\star$ defined by sending the $\Heis_0$ orbit of $\rho=(\vec{x},\vec{y},\vec{z})\in\mathcal{R}^\star$ to $(\vec{x},\vec{y})\in V^\star$ equipps $\mathcal{X}^\star$ with the structure of a line bundle over $V^\star$.
Topologically we can identify this line bundle up to isomorphism by noting that it is once-twisted above each generator of $\pi_1(V^\star)=\Z^2$.
\end{proposition}
\begin{proof}

A computation reveals the conjugation action of $\Heis_0$ on $\mathcal{R}$ in Lie algebra coordinates is expressed
$\smat{1& g&k\\&1&h\\&&1}.(\vec{x},\vec{y},\vec{z})=(\vec{x},\vec{y},\vec{z}+g\vec{y}-h\vec{x})$.
Thus $\Heis_0$ acts trivially on the first factor of $\mathcal{R}=V\times\R^2$ and the orbit of a point $\vec{z}\in\R^2_{(\vec{x},\vec{y})}$ 
is the coset of $\mathsf{span}\{\vec{x},\vec{y}\}\subset\R^2_{(\vec{x},\vec{y})}$ containing it.
In the subset $\mathcal{R}^\star$ at least one of $\vec{x},\vec{y}$ is nonzero, and the condition that $(\vec{x},\vec{y})\in V(x_1y_2-x_2y_1)=V(\det\smat{x_1&y_1\\x_2&y_2})$ implies $\vec{x}$ and $\vec{y}$ are linearly dependent.
It follows that the $\Heis_0$ orbits on $\mathcal{R}^\star$ are lines, foliating each $\R^2_{(\vec{x},\vec{y})}$ over $V^\star$ and the leaf space is a line bundle over $V^\star$.

Equipping each $\R^2_{(\vec{x},\vec{y})}$ with the standard inner Euclidean inner product, 
a canonical choice of representatives for cosets of $\ell_{(\vec{x},\vec{y})}=\mathsf{span}\{\vec{x},\vec{y}\}$ is given by the orthogonal line $\ell_{(\vec{x},\vec{y})}^\perp\subset\R^2_{(\vec{x},\vec{y})}$.
This defines a section $\mathcal{X}^\star\to\mathcal{R}^\star$ sending a conjugacy class $[\rho]_{\Heis_0}=[(\vec{x},\vec{y},\vec{z})]_{\Heis_0}$ to its representation with $\vec{z}$-coordinate on $\ell_{(\vec{x},\vec{y})}^\perp$, and identifies
 $\mathcal{X}^\star=\{(\vec{x},\vec{y},\vec{z})\mid (\vec{x},\vec{y})\in V^\star, \; \vec{z}\in\ell_{(\vec{x},\vec{y})}^\perp\}$ with a subbundle of $V^\star\times\R^2\to V^\star$.

Line bundles over $V^\star\cong\R_+\times T^2$ are in bijection with $H^1(T^2,\Z_2)\cong\Z_2^2$, determined up to isomorphism by whether pulling back along generators of $\pi_1(T)^2$ gives cylinders or M\"obius bands.
A convenient choice of generators in the $(\vec{u},\vec{v})$ coordinates introduced above are $\alpha(\theta)=(\vec{e_1},\vec{p_\theta})$ and $\beta(\theta)=(\vec{p_{\theta}},\vec{e_1})$ for $e_1=\smat{1\\0}$ and $\vec{p_\theta}=\smat{\cos\theta\\sin\theta}$.
An explicit computation using the description of $\mathcal{X}^\star$ above shows the bundle restricts to a M\"obius band above each of $\alpha,\beta$, so $\mathcal{X}^\star$ is the  line bundle over $\R_+\times T^2$ represented by $(1,1)\in H^1(T^2,\Z_2)$.
\end{proof}

The choice of explicit sections has identified $\mathcal{H}^\star$ and $\mathcal{X}^\star$ with subsets of $\mathcal{R}$.
The space of interest $\mathcal{U}^\star$ identifies with their intersection, $\mathcal{X}^\star\cap\mathcal{H}^\star$, which is the restriction of $\mathcal{X}^\star\to V^\star$ to the base $T^2\subset\S^3$.

\begin{corollary}
\label{cor:U}
Let $\mathcal{U}^\star$ denote the quotient of $\mathcal{R}^\star$ by homothety and conjugacy (equivalently, the quotient of $\mathcal{X}^\star$ by homothety).
Then the map $\mathcal{U}^\star\to T^2$ defined by sending the orbit of $\rho=(\vec{x},\vec{y},\vec{z})$ to $(\vec{x}/\|\vec{x}\|,\vec{y}/\|\vec{y}\|)\in V\cap \S^3\cong T^2$ equips $\mathcal{U}^\star$ with the structure of a line bundle over the torus.
We may realize $\mathcal{U}^\star$ explicitly the subvariety of $\mathcal{U}^\star\subset\R^6$ consisting of triples of vectors $(\vec{x},\vec{y},\vec{z})$ such that $\vec{x}$ and $\vec{y}$ are collinear, and $\vec{z}$ is orthogonal to their span.
$$\mathcal{U}^\star=
V\left(\mat{
\|x\|^2+\|y\|^2=1,& \vec{z}\cdot\vec{x}=0\\
x_1y_2-x_2y_1=0,& \vec{z}\cdot\vec{y}=0
}\right)\subset\R^6$$
Note that like $\mathcal{X}^\star$, we may characterize the bundle $\mathcal{U}^\star\to T^2$ topologically by noting that its restriction to each standard generator of $T^2$ is a M\"obius band. 
\end{corollary}

The developing pair of a Heisenberg torus is only well defined up to orientation preserving transformations, so potential holonomies lie in the space $\mathcal{R}/\Heis_+$, a twofold quotient of $\mathcal{U}^\star$ computed here.
We will deal with this $\Z_2=\Heis_+/\Heis_0$ ambiguity after determining which points of $\mathcal{U}^\star$ are in fact holonomies.

\subsection{The Deformation Space of Tori}

As a warm-up to computing the deformation space of Heisenberg tori, we review the analogous problem for Euclidean and affine structures.
Euclidean tori are complete metric spaces, and so are determined by their holonomy, which is necessarily discrete and faithful (for instance, by Thurston's book \cite{Thurston80}, Proposition 3.4.10). 
Discrete subgroups $\Z^2<\Isom(\E^2)$ act by translations,
thus the deformation space of Euclidean tori identifies with the $\Isom(\E^2)$-conjugacy classes of marked planar lattices, $\mathcal{D}_{\E^2}(T^2)\cong \GL(2;\R)/\O(2)$.
The unit area structures parameterized by the familiar Teichm\"uller space $\H^2=\SL(2;\R)/\SO(2)$.

The affine plane admits no invariant metric, which complicates the story significantly.
Complete affine structures have universal cover affinely diffeomorphic to $\A^2$, but in contrast to the Euclidean case incomplete structures abound.
The work of Baues \cite{Baues14} provides a remarkably comprehensive description of the classification of affine tori, in particular containing the following classification theorem.

\begin{theorem}[\cite{Baues14}, Theorem 5.1]
The universal cover of an affine torus is affinely diffeomorphic to one of the following spaces: the affine plane $\A^2$, the half plane $\mathcal{H}=\{(x,y)\mid y>0\}$, the quarter plane $\mathcal{Q}=\{(x,y)\in\A^2\mid x,y>0\}$ or the universal cover of the punctured plane $\mathcal{P}=\widetilde{\A^2\smallsetminus 0}$.
Furthermore the developing maps of affine structures are covering projections onto their images.	
\end{theorem}

As $\Hs^2$ admits no invariant metric, we must be prepared for complications similar to the affine case.
Such difficulties do not materialize however, as canonically weakening Heisenberg structures to affine ones, we may use the classification above to show all Heisenberg tori are complete.

\begin{corollary}
All Heisenberg structures on the torus are complete.
\label{cor:Complete}
\end{corollary}
\begin{proof}
Let $(f,\rho)$ be the developing pair for a Heisenberg torus $T$, considered as an affine structure.
If $T$ is not complete, there is an affine transformation $A$ with $A.f(\widetilde{T})\in\{\mathcal{H},\mathcal{Q},\A^2\smallsetminus 0\}$ and holonomy $A\rho A\inv$ preserving this developing image.
But by the classification of affine tori, holonomies of these tori contain elements of $\det\neq 1$, whereas $\Heis$ is unipotent so $\det A\rho(\Z^2)A\inv=\{1\}$.
Thus $T$ is in fact complete, with developing map a diffeomorphism $f\colon \widetilde{T}\to \A^2$.
\end{proof}

Here we pursue a self-contained computation the deformation space $\mathcal{D}_{\Hs^2}(T^2)$, using the understanding of representations $\Z^2\to \Heis_0$ up to conjugacy developed in section 3.1.
Specifically, for $\rho\in\Hom(\Z^2,\Heis)$ we either construct a corresponding developing map $f$ giving a Heisenberg structure $(f,\rho)$ on $T^2$ (and prove its uniqueness), or we show no developing map for $\rho$ can exist.

A developing map for $\rho\colon\Z^2\to\Heis$ is a $\rho$-equivariant immersion $f\colon\R^2\to\Hs^2$.
A natural $\rho$-equivariant self map of the plane can be constructed directly from $\rho$, relying on the fact that
each representation of $\Z^2$ extends uniquely to a representation $\hat{\rho}\colon\R^2\to\Heis_0$ via $\widehat{\rho}(x,y)=\rho(e_1)^x\rho(e_2)^y$.
The orbit map $f_\rho\colon \R^2\to \Hs^2$ defined by $(x,y)\mapsto\widehat{\rho}(x,y).\vec{0}$ for this extended representation is $\rho$-equivariant, and thus a developing map for a Heisenberg structure when it is an immersion.
As the following two propositions show, this construction actually produces developing maps for all complete Heisenberg tori (and thus by Corollary \ref{cor:Complete} for all Heisenberg tori, although with the aim of producing a self-contained proof we do not presume that here).

\begin{proposition}
Let $\mathcal{F}\subset\mathcal{U}$ be the subset of representations $\rho$ with extensions $\widehat{\rho}$ acting freely on $\Hs^2$.
Then each $\rho\in\mathcal{F}$ determines a unique Heisenberg structure on $T^2$, which is complete, and all complete structures with holonomy in $\Heis_0$ arise this way.
\end{proposition}
\begin{proof}
If $\widehat{\rho}$ acts freely, the orbit map $f_\rho\colon\R^2\to\Hs^2$ is injective, and a computation reveals $(df_\rho)_0\colon T_0\R^2\to T_0\Hs^2$ is injective.
Furthermore $(df_\rho)_x=\widehat{\rho}(x).(df_\rho)_0$ so $f_\rho$ is an immersion of $\R^2$ and $(f_\rho,\rho)$ is a developing pair for a Heisenberg torus.
Similarly, the other orbit maps $\vec{u}\mapsto\widehat{\rho}(\vec{u}).q$ are immersions (thus open maps) for any $q\in\Hs^2$, and distinct $\widehat{\rho}(\R^2)$ orbits partition $\Hs^2$ into a disjoint union of open sets.
By connectedness then $f_\rho$ is onto, hence a diffeomorphism so the corresponding Heisenberg structure is complete.

Alternatively, let $\rho\colon\Z^2\to\Heis_0$ be the holonomy of a complete torus, but assume $\widehat{\rho}\colon\R^2\to\Heis_0$ fails to act freely.
Then some element, and hence some 1-parameter subgroup $L<\R^2$, fixes a point under the action induced by $\widehat{\rho}$.  
This line $L$ intersects $\Z^2$ only in $\vec{0}$ (as $\rho$ acts freely by completeness); and so is dense in the quotient $\R^2/\Z^2$.  
Thus there are sequences $\vec{v}_n\in\Z^2$ with $\rho(v_n)$ coming arbitrarily close to stabilizing a point, and $\widehat{\rho}$ does not act properly discontinuously, contradicting completeness.

Finally, let $(f,\rho)$ be a complete structure and $(\phi,\rho)$ another structure with the same holonomy.  
Then $f\inv \phi:\widetilde{T}\to\widetilde{T}$ is $\pi_1(T)$-equivariant and descends to a diffeomorphism $\psi:T\to T$.  
But $\psi_\ast$ is the identity on fundamental groups and as the torus is a $K(\pi,1)$, $\psi$ is isotopic to the identity.
Thus $(f,\rho)$ and $(\phi,\rho)$ are developing pairs for the same Heisenberg structure.
\end{proof}

Constructing developing maps from the extensions $\widehat{\rho}$ provides endows these tori with the structure of a commutative group via the identification $\widehat{\rho}(\R^2)/\rho(\Z^2)\cong f_\rho(\R^2)/\rho(\Z^2)$.
The existence of this group structure can more generally be deduced from the similar observation of Baues and Goldman concerning affine structures \cite{BauesGoldman05}.

\begin{corollary}
Complete Heisenberg tori are the group objects in the category of Heisenberg manifolds, analogous to elliptic curves in the category of Riemann surfaces.	
\end{corollary}

\begin{proposition}
The subset $\mathcal{F}\subset\mathcal{U}$ of conjugacy classes with freely acting extensions $\widehat{\rho}\colon\R^2\to\Heis_0$ is a trivial $\R^\times$ bundle over the cylinder $\mathsf{Cyl}=T^2\smallsetminus S$, for $S$ the circle defined by the intersection of  $T^2=V(x_1y_2-x_2y_1)\cap\S^3$ with the plane $V(y_1,y_2)$.
\end{proposition}
\begin{proof}
A representation $\widehat{\rho}\in\mathcal{U}$ is faithful if and only if the logarithm of its generators $\smat{x_1&z_1\\&y_1}$ and $\smat{x_2 &z_2\\&y_2}$ are linearly independent in $\heis$.
In Lie algebra coordinates, linearly dependent elements of $\heis^2$ form the variety $\mathsf{Rk}_1\subset\mathsf{M}_{3\times 2}(\R)$ of rank one matrices $(\vec{x},\vec{y},\vec{z})=\smat{x_1&y_1&z_1\\x_2&y_2&z_2}$, alternatively described as triples of simultaneously collinear vectors $\vec{x}\parallel\vec{y}\parallel\vec{z}\in\R^2$.
There are no faithful $\R^2$ representations into the 1-dimensional center of $\Heis$, so it suffices to consider the representations in $\mathcal{U}^\star$.
Recalling \ref{cor:U}, points $(\vec{x},\vec{y},\vec{z})$ of $\mathcal{U}^\star$ satisfy $\vec{x}\parallel\vec{y}$ and $\vec{z}$ perpendicular to their span.
Thus any $(\vec{x},\vec{y},\vec{z})\in\mathcal{U}^\star\cap\mathsf{Rk}_1$ necessarily has $\vec{z}=0$, so the intersection $\mathcal{U}^\star\cap\mathsf{Rk}_1$ is the torus
$(\vec{x},\vec{y},0)\subset\mathcal{X}^\star$.
 The conjugacy classes of faithful representations constitute the complement of this zero section of $\mathcal{U}^\star\to T^2$.

A non-identity element of $\Heis_0$ stabilizes a point of $\Hs^2$ if and only if it acts trivially on the leaf space of the invariant foliation and has nontrivial shear.  In Lie algebra coordinates this forms the set $\mathcal{S}=\left\{\smat{x&z\\&0}\mid x\neq 0\right\}\subset\heis$. 
The extension $\widehat{\rho}$ acts freely if and only if in Lie algebra coordinates, each generator misses $\mathcal{S}$.
All faithful representations $(\vec{x},\vec{y},\vec{z})$ with $y_1,y_2\neq 0$ act freely, and all with $\vec{y}=0$ fail to.
If $\vec{y}=(0,y_2)$ then $\rho\in\mathcal{R}$ implies $x_1=0$ so $\rho$ acts freely, and similarly for $\vec{y}=(y_1,0)$.
Thus faithful representations fail to act freely if and only if $\vec{y}=0$, and the space of freely acting representations is $\mathcal{F}=\mathcal{U}^\star\smallsetminus V(z_1,z_2)\cup V(y_1,y_2)$.

The intersection $S=T^2\cap V(y_1,y_2)$ is a $(1,1)$ curve with respect to the $(\vec{u},\vec{v})$ coordinates, and $\mathcal{U}^\star\smallsetminus V(y_1,y_2)$ is an $\R$-bundle over $\mathsf{Cyl}=T^2\smallsetminus S$.
This bundle is trivial as the generator of $\pi_1(\mathsf{Cyl})$ is parallel to $V(y_1,y_2)$ and the restriction the doubly twisted bundle $\mathcal{X}$ to a $(1,1)$ curve in the base is a cylinder.
The subvariety $V(z_1,z_2)$ is the zero section of this bundle, thus its complement is the trivial $\R^\times$ bundle over $\mathsf{Cyl}$.
\end{proof}

This classification gives a simple, self contained argument that no incomplete structures exist.
An incomplete structure must have holonomy in $\mathcal{U}\smallsetminus\mathcal{F}$, but geometric reasons preclude these from being the holonomy of Heisenberg tori.
This completes the classification of tori with $\Heis_0$ holonomy, and a quick observation implies there can be no others.

\begin{proposition}
Representations $\rho\in\mathcal{U}\smallsetminus \mathcal{F}$ are not the holonomy of any Heisenberg torus.
Consequently all Heisenberg tori are complete, with holonomy into $\Heis_0$.
\end{proposition}
\begin{proof}
There are three classes of elements in $\mathcal{U}\smallsetminus \mathcal{F}$: representations into the center, representations $(\vec{x},\vec{y},\vec{z})$ with $\vec{z}=0$	 and representations with $\vec{y}=0$.
These classes are all topologically conjugate, and preserve a fibration of the plane $\Hs^2\surject \R$.
Representations into the center act by translations parallel to the $x$ axis, preserving the invariant foliation of $\Hs^2$, and similarly for those with $\vec{y}=0$.
Representations with $\vec{z}=0$ are not faithful, and factor through a representation $\R\to\Heis$ with orbits foliating the plane by parabolas.

To see these cannot be the holonomy of tori, let
$\rho\in\mathcal{U}\smallsetminus\mathcal{F}$ preserve the fibration $\pi\colon\Hs^2\surject\R$, and assume $(f,\rho)$ is a developing pair for some Heisenberg torus.
Let $\Omega=f(\widetilde{T})$ be the developing image, and note $\pi(\Omega)\subset \R$ is open as $f$ is a local diffeomorphism and $\pi$ is a bundle projection.
Let $Q\subset\widetilde{T}$ be a compact fundamental domain for the action of $\Z^2$ by covering transformations, and note that $\pi(f(Q))=\pi(f(\Omega))$ as $\rho$ is fiber preserving.
But $\pi(f(Q))$ is compact, and thus not open in $\R$, a contradiction.

It follows from this that all Heisenberg tori are complete, and have holonomy in $\Heis_0$.
Indeed $T$ be any Heisenberg torus with developing pair $(f,\rho)$ and $\widetilde{T}\to T$ the cover corresponding to the subgroup $\rho(\Z^2)\cap\Heis_0$.  
Then $\widetilde{T}$ is complete so $T$ is also, and $\rho(\Z^2)$ acts freely and properly discontinuously on $\Hs^2$.
As $T^2$ is orientable the holonomy takes values in $\Heis_+$, but every element of $\Heis_+\smallsetminus \Heis_0$ fixes a point in $\Hs^2$ so in fact $\rho$ is $\Heis_0$ valued and $T=\widetilde{T}$.
\end{proof}

Thus a representation $\rho\colon\Z^2\to\Heis$ is either the holonomy of a unique complete structure on $T^2$, or is not the holonomy of any geometric structure at all.
After dealing with the slight annoyance of $\Heis_0$ vs. $\Heis_+$ conjugacy, this directly provides a description of the 
the Teichm\"uller space $\mathcal{T}_{\Hs^2}(T^2)$ of unit area structures and the corresponding deformation space $\mathcal{D}_{\Hs^2}(T^2)=\R_+\times\mathcal{T}_{\Hs^2}(T^2)$.

\begin{theorem}
The projection onto holonomy identifies the Teichm\"uller space of unit area Heisenberg tori with the quotient of $\mathcal{F}$ by the free $\Z_2$ action of conjugacy by $\diag(-1,-1,1)$ and $\mathcal{T}_{\Hs^2}(T^2)\cong \mathcal{F}/\Z^2\cong \R^2\times \S^1$.
\end{theorem}
\begin{proof}
The map $\hol\colon\mathsf{Dev}_{\Hs^2}(T^2)\to\mathcal{R}$ projecting a developing pair onto its holonomy is a local homeomorphism by the Ehresmann-Thurston principle, which induces a continuous map $\overline{\hol}\colon \mathcal{D}_{\Hs^2}(T^2)\to\mathcal{R}/\Heis_+$.
The work above shows the map $\dev\colon\mathcal{F}\to\mathcal{D}_{\Hs^2}(T^2)$ defined by $\rho\mapsto [f_\rho,\rho]$ is a continuous surjection onto Teichm\"uller space $\mathcal{T}_{\Hs^2}(T^2)$.
As $\mathcal{F}\subset\mathcal{U}$ was defined only up to $\Heis_0$ conjugacy, $\dev$ factors through the quotient by $(\Heis_+/\Heis_0)\cong\Z_2$ conjugacy to a continuous bijection $\overline{\dev}\colon\mathcal{F}/\Z_2\to\mathcal{T}_{\Hs^2}(T^2)$.  
The composition $\overline{\hol}\circ\overline{\dev}$ is the identity on $\mathcal{F}/\Z_2$, so $\overline{\dev}$ is a homeomorphism.

Thus, $\mathcal{T}_{\Hs^2}(T^2)\cong\mathcal{F}/\Z_2$.
The quotient $\Heis_+/\Heis_0\cong\Z_2$, generated by $\diag(-1,-1,1)$, acts by conjugation in Lie algebra coordinates as $\diag(-1,-1,1).(\vec{x},\vec{y},\vec{z})=(\vec{x},-\vec{y},-\vec{z})$.
This action is free on $\mathcal{F}$ and the quotient $\mathcal{T}_{\Hs^2}(T^2)$ is the trivial $\R_+$ bundle over $\mathsf{Cyl}$, which is homeomorphic to the open solid torus  $\R^2\times\S^1$, and $\mathcal{D}_{\Hs^2}(T^2)\cong \R^3\times\S^1$.
\end{proof}

\begin{figure}[h!]
\includegraphics[width=\textwidth]{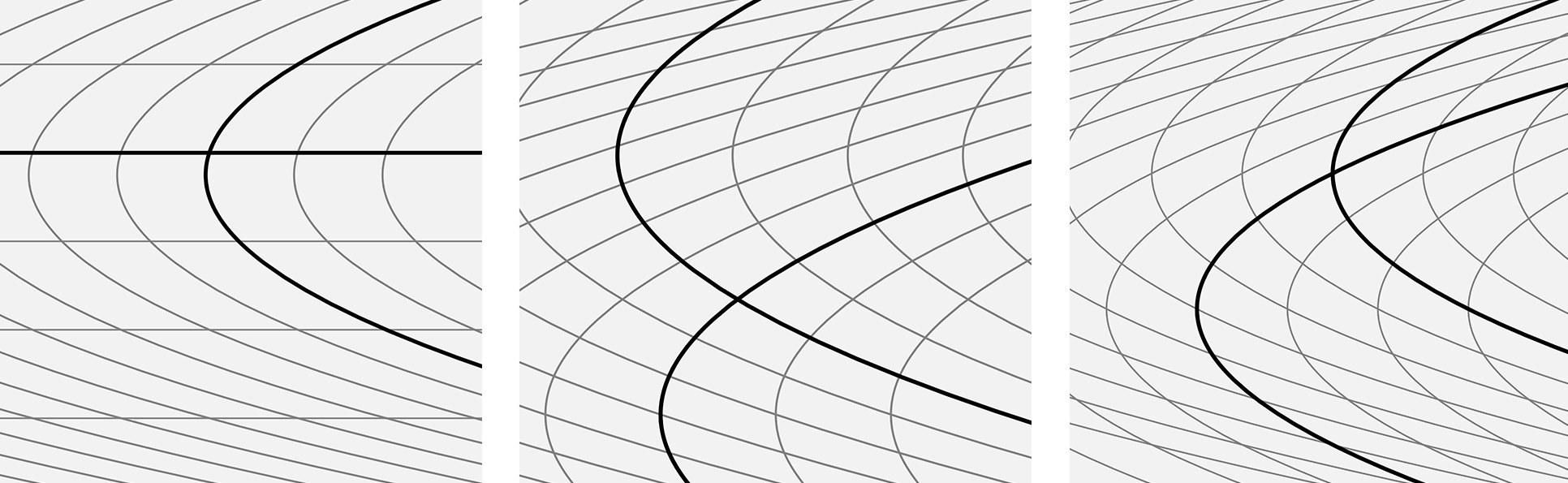}	
\caption{Some examples of developing maps for Heisenberg shear tori.}
\end{figure}

The identification $\mathcal{T}_{\Hs^2}(T^2)=\mathcal{F}/\Z_2$ identifies two distinct classes of Heisenberg tori; those containing a shear in their holonomy and those with holonomy into the subgroup of translations of the plane.
We will refer to these as \emph{shear tori} and \emph{translation tori} respectively.

\begin{corollary}
The space of unit-area translation tori is homeomorphic to $\R\times \S^1$, corresponding to the points of $\mathcal{F}\cap V(x_1,x_2)$.
\end{corollary}

It is notable that the set of developing pairs for Heisenberg translation tori is the same as the set of developing pairs for Euclidean tori, but the corresponding deformation spaces are not homeomorphic, with $\mathcal{T}_{\E^2}(T^2)$ a disk and $\mathcal{T}_{\Hs^2}(T^2)$ a cylinder.
This is due to the different notion of equivalence coming from $\Heis_+$ and $\Isom_+(\E^2)$ conjugacy; the former acting by shears and the latter by rotations.
The familiar fact that Euclidean torus has a representative holonomy containing horizontal translations is a consequence of this, as is the fact that each Heisenberg translation torus has a representative holonomy translating along (Euclidean) orthogonal lines.

\begin{figure}[h]
\includegraphics[width=\textwidth]{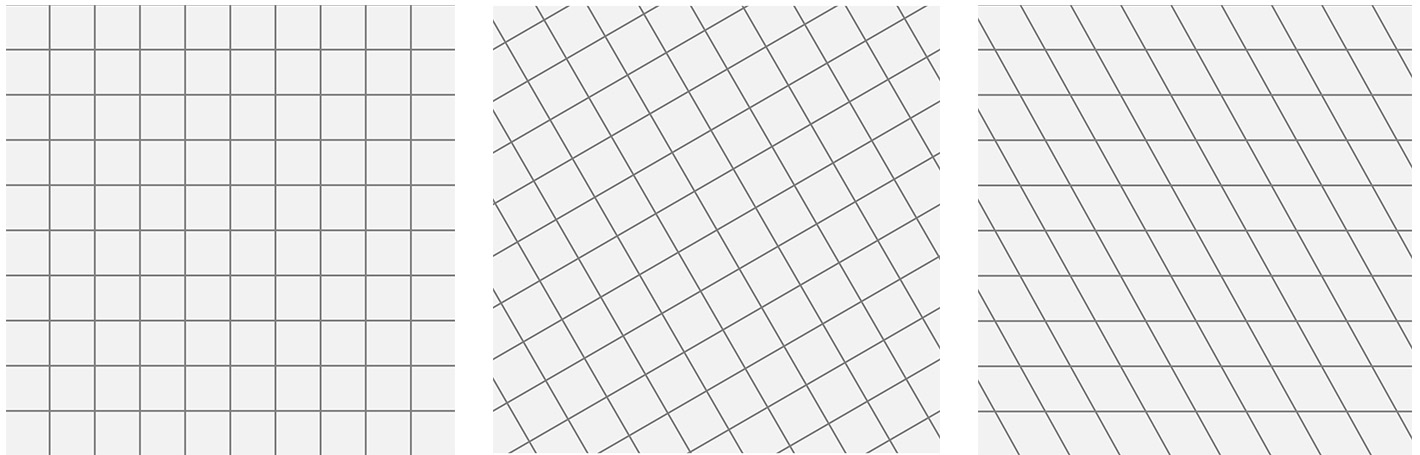}
\caption{Developing maps for translation tori.  The left two are equivalent as Euclidean structures, whereas the right two are as Heisenberg structures.
All three represent the same (unique) affine translation torus.}	
\end{figure}

Every Heisenberg structure canonically weakens to an affine structure, defining the map $\omega\colon\mathcal{D}_{\Hs^2}(T^2)\to\mathcal{D}_{\A^2}(T^2)$ with image in the complete structures.

\begin{corollary}
The space $\omega(\mathcal{D}_{\Hs^2}(T^2))$ of Heisenberg structures up to affine equivalence is one dimensional, homeomorphic to $\R$.
\end{corollary}
\begin{proof}
By Goldman and Baues \cite{BauesGoldman05}, the space of complete affine structures on $T^2$ is diffeomorphic to the plane, and by completeness we identify this with its projection onto holonomy.
This realizes $\omega(\mathcal{D}_{\Hs^2}(T^2))$ as the quotient of $\mathcal{F}$ by affine conjugacy, on which the subgroups of rotations and linearly independent scalings act freely.
Thus the $\S^1$ factor and $\R_+^2$ directions of independent scalings collapse in the quotient, and $\omega(\mathcal{D}_{\Hs^2}(T^2))\cong\R$.

 \end{proof}

\subsection{Which Orbifolds Admit Heisenberg Structures?}

We may use this description of the deformation space of tori to understand all Heisenberg orbifolds. 
An orbifold covering $\pi\colon\mathcal{Q}\to\mathcal{O}$ induces a map $\pi^\ast\colon\mathcal{D}_{\Hs^2}(\mathcal{O})\to\mathcal{D}_{\Hs^2}(\mathcal{Q})$ by pullback of geometric structures,  easily expressed on developing pairs as $\pi^\ast([f,\rho])=[f,\rho|_{\pi_1(\mathcal{Q})}]$ for $\pi_1(\mathcal{Q})<\pi_1(\mathcal{O})$ the subgroup corresponding to the cover.

\begin{proposition}
All Heisenberg structures on orbifolds are complete, and projection onto the holonomy is an embedding $\mathcal{D}_{\Hs^2}(\mathcal{O})\inject \Hom(\pi_1(\mathcal{O}),\Heis)/\Heis_+$.
Under this identification, a finite sheeted covering $\mathcal{Q}\to\mathcal{O}$ describes the deformation space $\mathcal{D}_{\Hs^2}(\mathcal{O})$ 
as the preimage of $\mathcal{D}_{\Hs^2}(\mathcal{Q})$ under the restriction $\pi^\ast\colon \rho\mapsto \rho|_{\pi_1(\mathcal{Q})}$.
\label{Prop:Orbifold_Def}
\end{proposition}
\begin{proof}
Let $\mathcal{O}$ be a Heisenberg orbifold with developing pair $[f,\rho]$, and choose a finite covering $\pi\colon T\to \mathcal{O}$.  
Then by the completeness of $\pi^\ast[f,\rho]\in\mathcal{D}_{\Hs^2}(T)$, the developing map $f$ is a diffeomorphism and $\rho|_{\pi_1(T^2)}$ (hence $\rho$, as $\pi_1(T^2)$ is finite index in $\pi_1(\mathcal{O})$) acts properly discontinuously.
As $\pi_1(T^2)<\pi_1(\mathcal{O})$ is an essential subgroup for all orbifolds covered by the torus, the faithfulness of $\rho|_{\pi_1(T^2)}$ implies faithfulness of $\rho$.
Thus the structure $[f,\rho]$ on $\mathcal{O}$ is complete.
Let $[\phi,\rho]$ be another Heisenberg structure on $\mathcal{O}$ with the same holonomy, then $\phi f\inv:\widetilde{\mathcal{O}}\to\widetilde{\mathcal{O}}$ is $\pi_1(\mathcal{O})$ equivariant and descends to a Heisenberg map $\mathcal{O}\to\mathcal{O}$, inducing the identity on fundamental groups.  
Thus these structures represent the same point in deformation space so projection onto holonomy is an embedding.
\end{proof}

This further restricts the possible topologies of Heisenberg orbifolds.  In particular, any torsion in the fundamental group is represented faithfully by the holonomy so orbifolds may only have corner reflectors and cone points of order two.
In the appendix, we show that all of these actually admit Heisenberg structures, and calculate their deformation spaces.

\begin{corollary}
If $\mathcal{O}$ is a Heisenberg orbifold, necessarily $\mathcal{O}$ is $T^2$, the Klein bottle $\S^1\widetilde{\times}\S^1$, and the pillowcase $\S^2(2,2,2,2)$ or one of their quotients: the cylinder $\S^1\times I$, the Mobius band $\S^1\widetilde{\times} I$, the square $\D^2(\varnothing;2,2,2,2)$, $\D^2(2,2;\varnothing)$, $\D^2(2; 2,2)$ and $\RP^2(2,2)$, .
\end{corollary}

\section{Collapse \& Regenerations}

Unless otherwise specified, $\mathbb{X}$ denotes any one of the constant curvature geometries $\S^2,\E^2$ or $\H^2$ realized as a subgeometry of $\RP^2$ (see Section 2.4) throughout.
Conjugate models will be denoted $C.\mathbb{X}$ for $C\in\GL(3;\R)$.
Recall a collapsing path $[f_t,\rho_t]$ of $\mathbb{X}$ structures degenerates to a Heisenberg structure if 
there is a path $C_t\in\GL(3;\R)$ with $C_t.[f_t,\rho_t]=[C_t f_t,C_t\rho_tC_t\inv]$ converging in the space of developing pairs to $[f_\infty,\rho_\infty]$ with $f_\infty$ an immersion into the affine patch $\Hs^2=\{[x:y:1]\}$ and $\rho_\infty$ with image in $\Heis$.
We may view these rescaled $\mathbb{X}$ structures as geometric structures modeled on the conjugate subgeometry $C_t.\mathbb{X}$, which converge to a Heisenberg structure as $C_t.\mathbb{X}$ itself converges to $\Hs^2$.
The following proposition, a consequence of \cite{CooperDW14} (or a straightforward calculation of conjugacy limits of Lie algebras) describes which conjugacies of $\mathbb{X}\in\{\S^2,\E^2,\H^2\}$ limit to the Heisenberg plane.

\begin{proposition}
\label{prop:Limits}
Let $\mathbb{X}$ be a projective model of a constant curvature geometry in $\RP^2$, and $C_t\colon [0,\infty)\to \PGL(3;\R)$ be any path of projective transformations.
 After potentially rescaling the matrix representatives and
applying the KBH decomposition (Theorem 4.1 of \cite{CooperDW14}) we write $C_t=K_tD_tH_t$ for $K_t\in \mathsf{O}(3)$, $H_t\in\mathsf{Isom}(\mathbb{X})$ and $D_t=\mathsf{diag}(\lambda_t,\mu_t,1)$ with $\lambda_t\geq \mu_t\geq 1$.  
Then for $\mathbb{X}\in\{\S^2,\H^2\}$, the path of geometries $C_t.\mathbb{X}$ limits to the Heisenberg plane if and only if (i) $K_t$ converges in $\O(3)$ and  (ii) $\lambda_t,\mu_t$ and $\lambda_t/\mu_t$ all diverge to $\infty$.
For $\mathbb{X}=\E^2$, the divergence $\lambda_t/\mu_t\to\infty$ alone is necessary and sufficient for (ii).
\end{proposition}

For convenience, we may without loss of generality restrict our attention to conjugacy limits by diagonal matrices $D_t=\diag(\lambda_t,\mu_t,1)$ with $\lambda_t>\mu_t>1$.
To see this, let $\mathbb{X}\in\{\mathbb{S}^2,\mathbb{E}^2,\mathbb{H}^2\}$ and suppose $C_t$ is any path of projective transformations such that $C_t\mathbb{X}\to\Hs^2$.
 Writing $C_t=K_tD_tH_t$ as above, we note $C_t\mathbb{X}=K_tD_t\mathbb{X}$ for all $t$ as $H_t\in\Isom(\mathbb{X})$, and as $K_t$ converges, we see $K_t^{-1}C_t\mathbb{X}=D_t\mathbb{X}$ is conjugate to the original path, even in the limit.

In this section, we classify which Heisenberg tori arise as rescaled limits of collapsing constant-curvature geometric structures.
As all constant-curvature tori are Euclidean, we consider the natural generalization of \emph{conemanifold structures} on the torus, which exist in both positive and negative curvature.

\subsection{Constant Curvature Cone Tori}

\begin{definition}
An $\mathbb{X}$ cone-surface is a surface $\Sigma$ with a complete path metric that is the metric completion of an $\mathbb{X}$-structure on the complement of a discrete set.	
\end{definition}

An $\mathbb{X}$ cone torus $T$ with cone points $C=\{p_1,\ldots p_n\}$ gives an incomplete $\mathbb{X}$-structure on $T_\star^2=T^2\smallsetminus C$ encoded by a class of developing pairs \cite{CooperHK00}.
The space of all such $\mathbb{X}$ cone tori can be identified with the subset $\mathcal{C}_\mathbb{X}(T^2)\subset\mathcal{D}_\mathbb{X}(T^2_\star)$ with metric completions $T^2$, given the subspace topology under this inclusion.

\begin{definition}
A path $T_t$ of $\mathbb{X}$ cone tori converges projectively if the associated incomplete structures $(f_t,\rho_t)\in\mathcal{D}_{\mathbb{X}}(T^2_\star)$ converge in $\mathcal{D}_{\RP^2}(T_\star^2)$ to a projective structure $(f_\infty,\rho_\infty)$, which can be completed to a projective torus $T$. 
Conversely, we say a Heisenberg torus $T$ \emph{regenerates} to $\mathbb{X}$ structures if there is a sequence of $\mathbb{X}$ cone tori converging to $T$ in $\RP^2$.
\end{definition}
 
In the above definition we always require the limiting projective structure on the torus to be nonsingular and allow only sequences of Riemannian cone tori where the cone point(s) vanish in the limit.
Allowing singularities in the limiting structure require a notion of \emph{real projective cone manifold} which is beyond the scope of this work.

In considering the question of regeneration, we further restrict our attention to sequences containing tori with a single cone point.
Cone tori with a single cone point admit a convenient combinatorial description via marked parallelograms, which provides us substantial control.
A marked $\mathbb{X}$-parallelogram is a convex quadrilateral $Q\subset \mathbb{X}$ with opposing geodesic sides of equal length, equipped with an ordering of the vertices $(v_1,v_2,v_3,v_4)$ proceeding counterclockwise from some initial vertex $v_1$.  
 Such a marked parallelogram is determined by a vertex $v=v_1$,the geodesic lengths of the sides adjacent to $v$ and the angle of incidence at $v$.  
The moduli space $\mathcal{P}(\mathbb{X})$ of marked parallelograms in nonpositive curvature is $\R_+^2\times(0,\pi)$, and $\left(0,\tfrac{\pi}{2\kappa}\right)^2\times (0,\pi)$ in spherical space of radius $\kappa$.
Just as deformation space of Euclidean tori can be identified with isometry classes of marked parallelograms $\mathcal{P}(\mathbb{E}^2)$, so can the deformation spaces of $\mathbb{H}^2$ and $\mathbb{S}^2$ cone structures (with the caveat that in positive curvature we must restrict our interest to sufficiently small cone angle).

\begin{proposition}
The map $\mathsf{Glue}\colon\mathcal{P}(\mathbb{X})\to\mathcal{C}_{\mathbb{X}}(T_\star)$ induced by isometrically identifying opposing sides of $Q\in\mathcal{P}(\mathbb{X})$ is a homeomorphism onto its image.  For $\mathbb{X}=\H^2$ this image is the entire deformation space $\mathcal{C}_\mathbb{X}(T_\star)$.
For $\mathbb{X}=\mathbb{S}^2$, the image contains all cone tori whose marked curves each have length less than $\pi/2$.
\label{prop:Gluing}
\end{proposition}
\begin{proof}
There is a unique orientation preserving isometry sending any oriented line segment in $\mathbb{X}$ to any other of the same length.
Thus marked quadrilateral $Q\subset\mathbb{X}$ determines unique side pairings $A,B\in\Isom_+(\mathbb{X})$ identifying opposing sides.
The quotient is a topologically a torus and inherits an $\mathbb{X}$ structure on the complement of $[v]$.
If $Q'$ is isometric to $Q$ then there is a $g\in\Isom(\mathbb{X})$ with $g.Q=Q'$ so the induced structures are isomorphic and $\mathsf{Glue}$ is well defined.

We may also define an inverse \emph{cutting map} as follows.
An marked $\mathbb{X}$ cone torus $T$ has generators $a,b\in\pi_1(T)$ based at the cone point, which
may be pulled tight relative $p$ to length minimizing representatives $\alpha,\beta$ as $T$ is a compact path metric space.
These are locally length minimizing, and so $\mathbb{X}$-geodesics away from $p$.
As $a\simeq\alpha,b\simeq \beta$ generate $\pi_1(T)$, $\alpha$ and $\beta$ have algebraic intersection number $1$.  
As each is globally length minimizing in its pointed homotopy class, the complement $T\smallsetminus\{\alpha\cup\beta\}$ contains no bigons.  
From this it follows that $\alpha\cap\beta=\{p\}$, and so cutting along $\alpha,\beta$ gives a simply connected surface locally modeled on $\mathbb{X}$, with four geodesic boundary components, opposing pairs of which have equal length.

For $\mathbb{X}=\mathbb{H}^2$ such a surface always embeds in $\mathbb{H}^2$ as a hyperbolic parallelogram, so this process defines a map $\mathsf{Cut}\colon\mathcal{C}_{\mathbb{X}}(T_\star)\mapsto\mathcal{P}(\mathbb{X})$.
When $\mathbb{X}=\mathbb{S}^2$ it is possible that the resulting surface does not embed in $\mathbb{S}^2$ (indeed the area of $Q$ may exceed the area of $\mathbb{S}^2$!).
However, if each of $\alpha,\beta$ has length less than $\pi/2$ then $Q$ certainly embeds in $\mathbb{S}^2$ (in fact it embeds into a hemisphere, and thus into the projective model $\RP^2$ of spherical geometry).
These maps $\{\mathsf{Cut},\mathsf{Glue}\}$ are inverses where their composition is defined, and thus define a pair of homeomorphisms.\end{proof}

\begin{figure}[h!]
\centering
\includegraphics[width=0.5\textwidth]{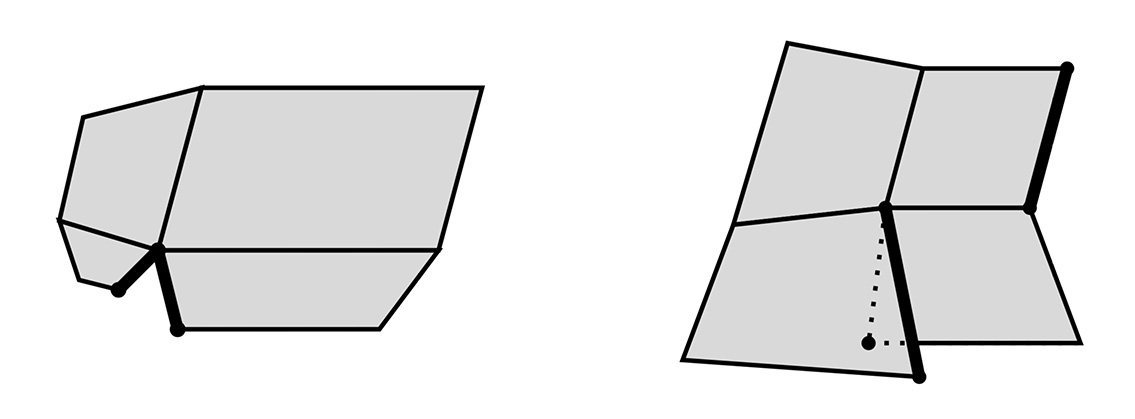}	
\caption{Small portions of the developing map for a hyperbolic and spherical cone torus}
\end{figure}

 To study regenerations from this combinatorial perspective, we characterize when a collapsing path in $\mathcal{C}_\mathbb{X}(T_\star)$ converges in $\mathcal{D}_{\RP^2}(T_\star)$ in terms of marked parallelograms.  First, we show such a characterization is possible as all convergent paths of cone tori admit such a representation.
 
 \begin{proposition}
  Let $\mathbb{X}_t$ be a sequence of geometries conjugate to a constant curvature geometry $\mathbb{X}$ which converge to $\Hs^2$ in the space $\mathfrak{S}_{\RP^2}$ of subgeometries of $\RP^2$.  If $T_t$ is any convergent sequence of $\mathbb{X}_t$ cone tori, then for all sufficiently large $t$ the structures $T_t$   lie in the image of the gluing map $\mathsf{Glue}\colon\mathcal{P}(\mathbb{X})\to \mathcal{C}_\mathbb{X}(T_\star)$. 
  \label{prop:WhenParallelogramsExist}
  \end{proposition}
  \begin{proof}
For $\mathbb{X}=\mathbb{H}^2$, the gluing map is surjective by Proposition \ref{prop:Gluing} so there is nothing more to prove.
For $\mathbb{X}=\mathbb{S}^2$, by the same proposition it is enough to show that eventually all the structures $T_t$ have marked curves of sufficiently short length.
Choose a smooth curve $\gamma$ representing one of the markings on $T_\star$.  For each $t$, we pull $\gamma$ tight fixing the cone point to a geodesic whose length $\ell_t\leq\mathsf{Length}_{T_t}(\gamma)$ defines the length of this marking curve in the $T_t$ structure.
As $t\to\infty$, we show that $\mathsf{Length}_{T_t}(\gamma)$, and hence $\ell_t$ tends to $0$.

Let $(f_t,\rho_t)$ be a convergent sequence of developing pairs for the cone tori $T_t$.
Choosing a lift $\widetilde{\gamma}$ of $\gamma$ we note $\mathsf{Length}_{T_t}(\gamma)=\mathsf{Length}_{\mathbb{X}_t}(f_t\circ\widetilde{\gamma})$, allowing computation of lengths in $T_t$ via the geometry $\mathbb{X}_t$.
 By the assumed convergence of this path of structures, the developing maps $f_t$ converge to some $f\colon \widetilde{T_\star}\to\RP^2$ uniformly in the $C^\infty$ topology on compact sets, so fixing any $\epsilon>0$, for all sufficiently large $t$, 
 $|\mathsf{Length}_{\mathbb{X}_t}(f_t\circ\widetilde{\gamma})-\mathsf{Length}_{\mathbb{X}_t}(f\circ\widetilde{\gamma})|<\ep.$
 
 Thus it suffices to understand the length of the fixed curve $f\circ\widetilde{\gamma}$ in the changing geometries $\mathbb{X}_t$.
 The geometry $\mathbb{X}_t=D_t\mathbb{S}^2$ is a conjugate of spherical geometry by some projective transformation $D_t$, which as in the discussion following Proposition \ref{prop:Limits} we may without loss of generality take to be represented by a diagonal matrix $D_t=\mathsf{diag}(\lambda_t,\mu_t,1)$.
 Changing perspective (applying $D_t^{-1}$) we note
 $\mathsf{Length}_{D_t\mathbb{S}^2}(f\circ\gamma)=\mathsf{Length}_{\mathbb{S}^2}(D_t^{-1}\circ f\circ\widetilde{\gamma})$, allowing us to instead compute the length of a varying curve in a fixed model of $\mathbb{S}^2$.
 
As $D_t\mathbb{S}^2$ limits to the Heisenberg plane, by Proposition \ref{prop:Limits} the eigenvalues $\lambda_t,\mu_t$ of $D_t$ diverge to $\infty$; hence the effect of $D_t^{-1}$ on the standard affine patch $\{[x:y:1]\}$ of $\RP^2$ is to collapse everything towards the origin.
Thus as $t\to\infty$, the curve $D_t^{-1}\circ f\circ\gamma$ converges to a constant map, and its sequence of lengths converges to $0$.
All together this implies for any $\ep>0$, for all sufficiently large $t$,
$$\ell_t\leq \mathsf{Length}_{T_t}(\gamma)=\mathsf{Length}_{\mathbb{X}_t}(f\circ\widetilde{\gamma})<\mathsf{Length}_{\mathbb{X}_t}(f\circ\widetilde{\gamma})+\tfrac{\ep}{2}=\mathsf{Length}_{\mathbb{S}^2}(D_t^{-1}f\circ\widetilde{\gamma})+\tfrac{\ep}{2}<\ep$$
This is stronger than we strictly require: taking $\ep=\pi/2$ and applying this to both marking curves is enough to provide the desired result in light of Proposition \ref{prop:Gluing}.
    \end{proof}
  
  Next, we give a precise description of these convergent sequences of structures in terms of their parallelogram representatives.
 
 \begin{proposition}
 Let $\mathbb{X}_t=D_t\mathbb{X}$ be a sequence of geometries conjugate to $\mathbb{X}$ which converge to $\Hs^2$ in the space $\mathfrak{S}_{\RP^2}$ of subgeometries of $\RP^2$ and $T_t$ a sequence of $\mathbb{X}_t$ cone tori.  Then the structures $T_t$ converge to a Heisenberg torus if and only if:
 \begin{enumerate}
 \item 	For all sufficiently large $t$, there is a choice of embeddings $Q_t\inject\mathbb{X}_t\subset\RP^2$ of the fundamental parallelograms for $T_t$ whose images converge the Hausdorff space of closed subsets of $\RP^2$ to a projective quadrilateral $Q$.
 \item The induced side pairings $A_t,B_t$ of $Q_t$ converge in $\PGL(3;\R)$ to a commuting pair of projective transformations $A,B$.
 \end{enumerate}
 \label{Prop:Quad_Convergence}
 \end{proposition}
\begin{proof}
Again without loss of generality we may assume that the conjugating transformations $D_t$ are represented by diagonal matrices.
Let $(f_t,\rho_t)$ be a convergent sequence of developing pairs for the incomplete structures on $T_\star=T^2\smallsetminus \{\ast\}$ for $\mathbb{X}_t$ cone tori $T_t$.
Choose a generating set $a,b\in\pi_1(T_\star)$ and a basepoint $q\in\widetilde{T_\star}$.
The universal cover $\widetilde{T_\star}$ is tiled by ideal quadrilaterals formed from the lifts of $a,b$.
For each $t$ these can be straightened to geodesics in the  $\mathbb{X}_t$ structure, let $\widetilde{Q_t}\subset\widetilde{T_\star}$ be the geodesic quadrilateral containing $q\in\widetilde{T_\star}$.

By Proposition \ref{prop:WhenParallelogramsExist}, for all sufficiently large $t$, the quadrilateral $\widetilde{Q}_t$ can be embedded as a subset of $\mathbb{X}_t$.
For these structures, the developing map $f_t$ itself provides such an embedding, and we define $Q_t=f_t(\widetilde{Q}_t)=\subset\mathbb{X}_t$ together with the side pairings
$A_t=\rho_t(a)$ and $B_t=\rho_t(b)$.
 When $\mathbb{X}=\mathbb{H}^2$
 The convergence of developing pairs then implies $A_t, B_t$ are convergent in $\PGL(3;\R)$  to $A, B$ and $Q_t$ converges to $Q_\infty$, a fundamental domain for the Heisenberg structure $T$ with sides paired by the commuting transformations $A,B$.

Conversely let $Q_t$ be a sequence of $\mathbb{X}_t$ parallelograms convergent in the Hausdorff space $\mathfrak{C}_{\RP^2}$ of closed subsets of $\RP^2$ to an affine parallelogram $Q$.
The triples $(Q_t, A_t, B_t)$ of the quadrilateral with side pairings define $\mathbb{X}_t$ cone tori, and hence $\RP^2$ punctured tori for all $t$.  
As $t\to \infty$ these converge to a punctured torus $T_\infty$ with holonomy in $\Heis$, and so $T_\infty\in\mathcal{D}_{\Hs^2}(T_\star)$.  
As $[A,B]=I$ the limiting holonomy factors through $\Z\oplus \Z$ and so the limiting torus can be completed to a torus $T_\infty$.
That the limits $A,B\in\Heis$ follows from the definition of $\mathbb{X}_t$ converging to $\Hs^2$, so this limiting projective structure canonically strengthens to a Heisenberg structure.
\end{proof}

\subsection{Translation Tori}

This combinatorial description of cone tori with at most one cone point provides enough control to completely understand the regeneration of translation tori.

\begin{theorem}
Let $\mathbb{X}\in\{\S^2,\E^2,\H^2\}$ and $\mathbb{X}_t=D_t.\mathbb{X}$ be a sequence of diagonal conjugates converging to $\Hs^2$.
Given any translation torus $T$ there is a sequence of $\mathbb{X}_t$ cone tori with at most one cone point converging to $T$.	
\end{theorem}

\begin{proof}[Proof (Euclidean Case):]

Heisenberg tori arise as limits of collapsing families of \emph{smooth} Euclidean tori (there are no Euclidean cone tori with a single cone point, per Gauss-Bonnet).
Let $T$ be a Heisenberg translation torus and $\E_t=D_t.\E^2$ be a sequence of diagonal conjugates of $\E^2$ converging to the Heisenberg plane.  
Choose a fundamental domain $Q$ for $T\subset \Hs^2$, together with side pairings $A,B$ by translations for $T$.
The underlying space for the models $\E^2$, $\E_t$ and $\Hs^2$ in $\RP^2$ are all the entire affine patch $\A^2=\{[x:y:1]\}$; and group $\mathsf{Tr}$ of translations acting on this affine patch is contained in each conjugate $D_t\Isom(\E^2)D_t\inv$ as well as $\Heis$.
Thus $(Q,A,B)$ encodes an $\E_t$-structure $[f,\rho]_{\E_t}$ on $T^2$ for each $t\in\R_+$.
Canonically weakening to projective structures, this is the constant sequence $[f,\rho]_{\RP^2}$ thus clearly convergent.
As $\rho(\Z^2)\subset\mathsf{Tr}< \Heis$, the limit canonically strengthens to the original Heisenberg structure $[f,\rho]_{\Hs^2}$.
\end{proof}

Viewed as Euclidean structures in the fixed model $\E^2$, the developing pairs $[D_t\inv f, D_t\inv \rho D_t]$ encode a collapsing collection of tori with one of the generators of the holonomy shrinking much faster than the other.
That is, even after rescaling to unit area structures this path fails to converge in Teichm\"uller space and limits to a point in the Thurston boundary.
The foliation represented by this point can actually be seen in the limiting Heisenberg structure as the invariant foliation pulled back from $dy$ on $\Hs^2$.

The approach for producing translation tori as limits of hyperbolic and spherical cone tori is similar in spirit, but more involved in the details.
Again we take a fundamental domain with side pairings $(Q,A,B)$ for the proposed limit, and view $Q$ as a geometric parallelogram in each of the model geometries $\mathbb{X}_t$. 
Side pairings $A_t,B_t\in\Isom(\mathbb{X}_t)$ are uniquely determined by each $\mathbb{X}_t$ structure on $Q$, and converge to $A,B$ in the limit.

\begin{figure}[h!]
\centering
\includegraphics[width=0.8\textwidth]{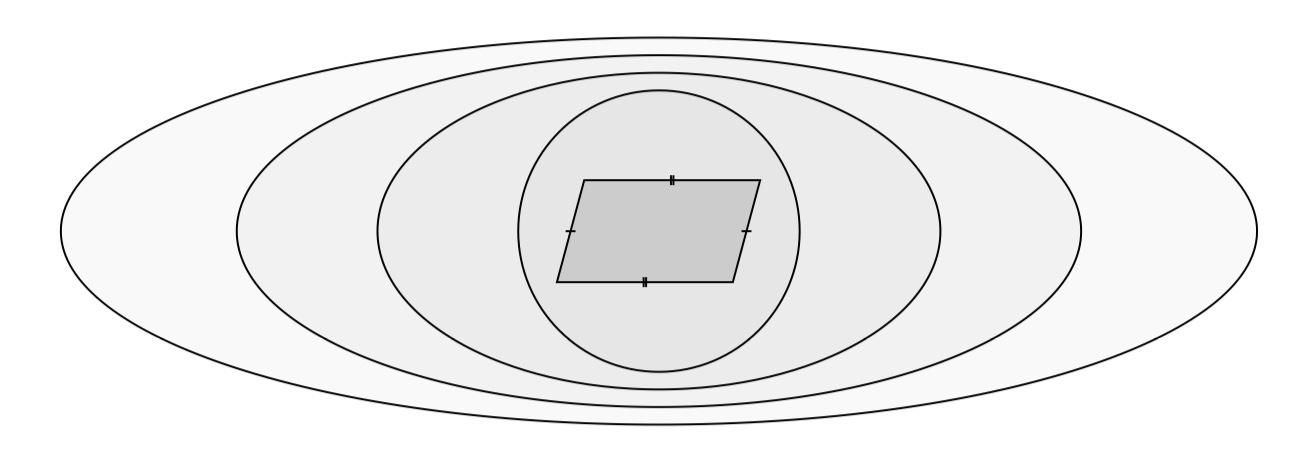}
\vspace{-0.5cm}
\caption{A fixed Quadrilateral and various conjugate models of $\H^2$ containing it.}
\end{figure}
\vspace{-0.5cm}

\begin{proof}[Proof: Hyperbolic and Spherical Cases]

If $\mathbb{X}\in\{\S^2,\H^2\}$, let $Q$ be an origin-centered fundamental domain for $T$ with side pairings $A,B\in\mathsf{Tr}$.  The existence of a convergent sequence of $\mathbb{X}_t$ cone tori $T_t\to T$ follows from the following facts.

\begin{itemize}
\item[] \textbf{Claim 1:} For large $t$, the quadrilateral $Q$ defines an $\mathbb{X}_t$ parallelogram.
\item[] \textbf{Claim 2:} The side pairing $A_t$ preserves the entire projective line through the $\mathbb{X}_t$ midpoints of paired sides.
\item[] \textbf{Claim 3:} If $Q$ is an $\mathbb{X}_t$ parallelogram for all $t$ and $A_t\in\Isom(\mathbb{X}_t)$ pairs opposing sides, $A_t$ converges as a sequence of projective transformations.
\item[] \textbf{Claim 4:} The $\mathbb{X}_t$ midpoints of the edges of $Q$ converge to the Euclidean midpoints as $t\to\infty$.
\end{itemize}

Given that $Q$ defines an $\mathbb{X}_t$ parallelogram, there are unique side pairing transformations $A_t,B_t\in\Isom(\mathbb{X}_t)$ determining an $\mathbb{X}_t$ cone torus.  
By the third claim, these sequences of transformations converge in $\PGL(3,\R)$, and as $\mathbb{X}_t\to\Hs^2$ in fact $A_\infty, B_\infty\in\Heis_0$. 
Recalling the discussion in Section 3, $\Heis_0$ acts simply transitively on the subspace $\mathcal{L}\smallsetminus\mathcal{H}$ of pointed lines, so the limiting transformations are completely determined by their action on  a pair $(p,\ell)$ of a point $p$ on a non-horizontal line $\ell$.  

\begin{figure}[h!]
\centering
\includegraphics[width=0.5\textwidth]{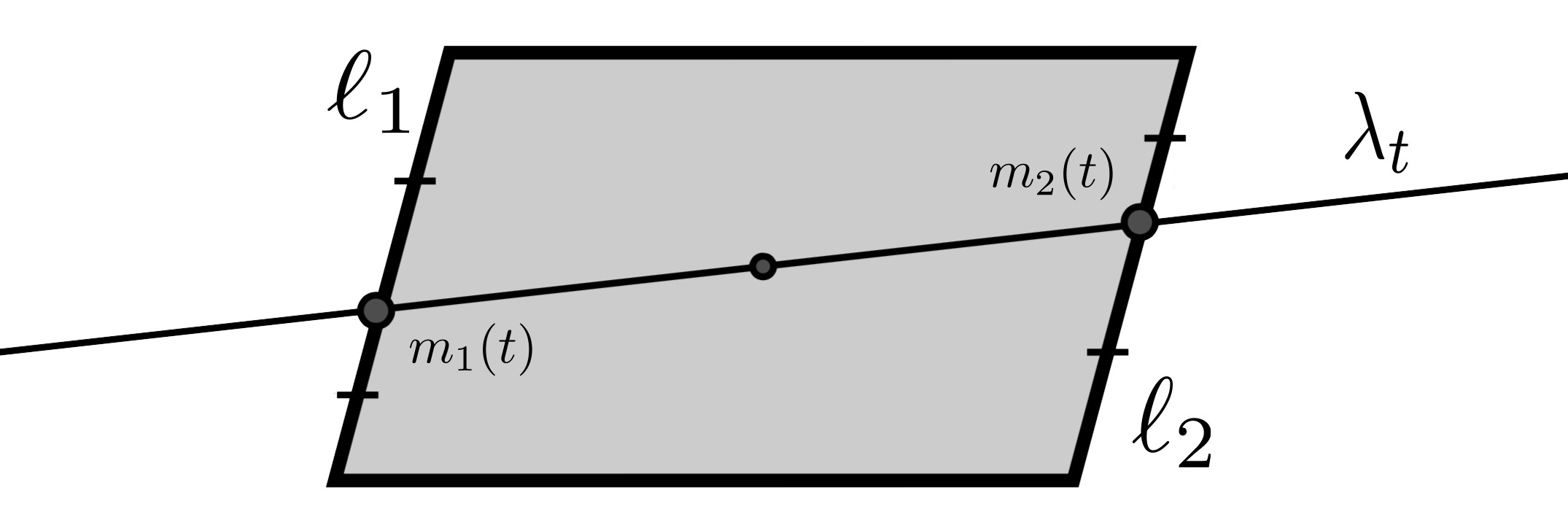}
\vspace{-0.5cm}
\end{figure}

Let $\ell_1,\ell_2$ be a pair of opposing sides of $Q$, with Euclidean midpoints $m_1,m_2$.  
For each $t$, let $m_1(t)$ and $m_2(t)$ be the $\mathbb{X}_t$ corresponding midpoints, and $\lambda_t$ the projective line connecting them.
The second claim implies $A_t$ preserves $\lambda_t$ and so the fourth fact above implies that $A_\infty$ preserves $\lambda=\overline{m_1m_2}$.
Thus $A_\infty$ sends the pair $(m_1,\ell_1)$ to $(m_2,\ell_2)$, as well as the pair $(m_1,\lambda)$ to $(m_2,\lambda)$.  
At least one of the lines $\ell_1,\lambda$ is non-horizontal, and so this completely determines the behavior of $A_\infty$.  
As this agrees precisely with the action of the original transformation $A$, we have $A_\infty=A$ and similarly for $B$.  
Thus the sequence of cone tori corresponding to the triples $(Q, A_t,B_t)$ converge to the original Heisenberg torus $T$ as $t\to\infty$.
\end{proof}

Thus the proof reduces to an argument for the four claims above.  
Throughout its often helpful to switch between the perspectives of a fixed fundamental domain $Q$ in expanding model geometries $\mathbb{X}_t$ and the equivalent picture of shrinking domains $Q_t$ in the fixed model $\mathbb{X}$.

\begin{claim}[1]
Let $Q$ be a affine parallelogram centered at $\vec{0}\in\A^2$ and $\mathbb{X}_t\to\Hs^2$ a sequence of diagonal conjugates of $\mathbb{X}\in\{\S^2,\H^2\}$.  Then for all $t>>0$, $Q$ defines an $\mathbb{X}_t$ parallelogram.	
\end{claim}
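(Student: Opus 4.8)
The plan is to exploit the central symmetry of $Q$, reducing the analytic content to a containment check that uses the hypothesis $t\gg 0$. Write $A_t=\diag\{a_t,b_t,1\}$ for the diagonal conjugacies, so that $\mathbb{X}_t=A_t\cdot\mathbb{X}$ with $\Isom(\mathbb{X}_t)=A_t\,\Isom(\mathbb{X})\,A_t\inv$, and recall that $a_t,b_t\to\infty$.

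First I would verify that for $t\gg 0$ the quadrilateral $Q$ lies inside $\mathbb{X}_t$ and that its affine edges are genuine $\mathbb{X}_t$-geodesic segments. When $\mathbb{X}=\S^2$ the underlying space of $\mathbb{X}_t$ is all of $\RP^2$, so containment is automatic. When $\mathbb{X}=\H^2$ in the Klein model, the underlying space of $\mathbb{X}_t$ is the interior of the ellipse $A_t\cdot\{x^2+y^2<1\}$, a convex affine region with semi-axes $a_t,b_t\to\infty$; since $Q$ is a fixed bounded convex set centered at the origin it is contained in this region once $a_t,b_t$ are large enough. In both cases the $\mathbb{X}$-geodesics are straight projective lines and $\PGL(3,\R)$ carries projective lines to projective lines, so the sides of the affine parallelogram $Q$ are chords of $\mathbb{X}_t$, hence $\mathbb{X}_t$-geodesic segments.

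The heart of the argument is the equality of opposing side lengths, which comes for free from symmetry. Let $\iota=[\diag\{-1,-1,1\}]$ be the involution of $\RP^2$ fixing the origin of $\A^2$ and acting there by $\vec{x}\mapsto-\vec{x}$. The matrix $\diag\{-1,-1,1\}$ preserves both quadratic forms $x^2+y^2+z^2$ and $x^2+y^2-z^2$, so it lies in $\O(3)\cap\O(2,1)$, whence $\iota\in\Isom(\mathbb{X})$ for $\mathbb{X}\in\{\S^2,\H^2\}$; and since $\iota$ commutes with the diagonal matrix $A_t$ we get $\iota=A_t\iota A_t\inv\in\Isom(\mathbb{X}_t)$. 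Because $Q$ is an affine parallelogram centered at the origin, $\iota(Q)=-Q=Q$, and $\iota$ sends each vertex to the opposite vertex, hence each side to its opposite side. As $\iota$ is an $\mathbb{X}_t$-isometry, opposing sides of $Q$ have equal $\mathbb{X}_t$-length, so $Q$ is an $\mathbb{X}_t$-parallelogram.

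Finally I would check that the side lengths and vertex angles of $Q$ fall in the ranges $(0,\mathsf{diam}(\mathbb{X}_t))$ and $(0,\pi)$ required of a marked parallelogram. Pulling back by $A_t\inv$ moves all four vertices of $Q$ toward $[0:0:1]$, so the $\mathbb{X}_t$-lengths of the sides tend to $0$ and are in particular below $\mathsf{diam}(\mathbb{X}_t)$ for $t\gg 0$; and the two edge directions at a vertex of $Q$ remain linearly independent after applying $A_t\inv$, so the $\mathbb{X}_t$-vertex-angles stay strictly between $0$ and $\pi$. Equipping $Q$ with its inherited cyclic vertex ordering then realizes it as a marked $\mathbb{X}_t$-parallelogram. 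I expect no real obstacle: the symmetry step needs no estimate at all, and the only place "$t\gg 0$" is genuinely used is the domain inclusion in the hyperbolic case and the diameter bound in the spherical case, so the one thing to be careful about is this degeneracy-and-domain bookkeeping rather than any analytic inequality.
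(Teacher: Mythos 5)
Your proposal is correct and follows essentially the same route as the paper: the key step in both is that the $\pi$-rotation $\diag\{-1,-1,1\}$ lies in $\O(3)\cap\O(2,1)$, commutes with the diagonal conjugacies so it survives as an isometry of every $\mathbb{X}_t$, and preserves the origin-centered parallelogram $Q$ while swapping opposite sides, with the hypothesis $t\gg 0$ used only to guarantee $Q\subset\mathbb{X}_t$ in the hyperbolic case. The extra bookkeeping you include (chords as geodesics, side lengths below $\mathsf{diam}(\mathbb{X}_t)$, angles in $(0,\pi)$) is fine but not needed beyond what the paper records.
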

\begin{proof}
The $\pi$-rotation about $\vec{0}\in\mathbb{A}^2$ represented by $R=\diag(-1,-1,1)$ is in $\O(3)\cap\O(2,1)$ and is invariant under diagonal conjugacy.  
Thus for each $t$, $R\in\Isom(\mathbb{X}_t)$.  
As $Q$ is an affine parallelogram with centroid $\vec{0}$, $RQ=Q$ so there is an $\mathbb{X}_t$ isometry exchanging opposing sides of $Q$.  
Thus if $Q\subset \mathbb{X}_t$ it defines an $\mathbb{X}_t$ parallelgoram.  
For $\mathbb{X}=\S^2$ this is always satisfied, and for $\mathbb{X}=\H^2$, the domains $\mathbb{X}_t$ limit to the affine patch and so eventually contain any compact subset.
\end{proof}

\begin{claim}[2]
Let $A\in\Isom(\mathbb{X})$ pair opposing sides of the $\mathbb{X}$ parallelogram $Q$.  Then $A$ preserves the projective line through the midpoints of the paired sides.	
\end{claim}
\begin{center}
\includegraphics[width=\textwidth]{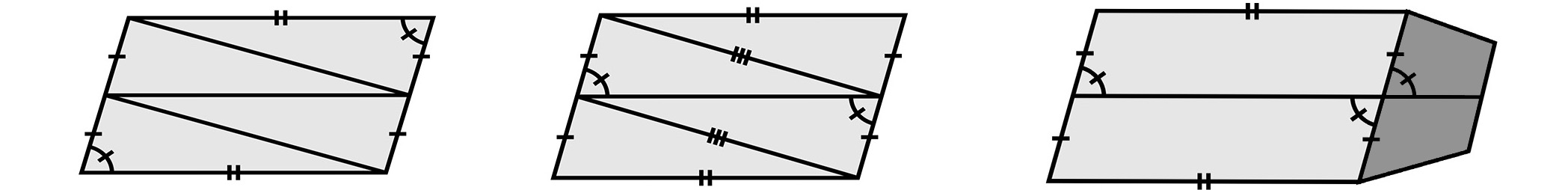}
\end{center}
\vspace{-0.5cm}
\begin{proof}
We argue in classical axiomatic geometry without assuming the parallel postulate as this applies equally to $\S^2,\H^2$.
Opposite angles of a constant-curvature parallelogram are congruent.
Connect the opposing sides of $Q$ paired by $A_t$ with a line segment $\lambda$ through their midpoints. 
This divides $Q$ into two quadrilaterals, subdivided by their diagonals into four triangles.  
The outer two of these triangles are congruent by side-angle-side, and so the diagonals are congruent.  
Thus the inner two triangles are congruent by side-side-side, meaning the opposite angles made by the edges with the line connecting their midpoints are equal.
Consider $Q$ and its translate $A.Q$.
These share an edge, which meets the segments $\lambda$ and $A_t\lambda$ at its midpoint $m$.
As $A$ is an isometry, it follows that opposite angles at $m$ are congruent.
Thus $\lambda$ and $A.\lambda$ are segments of a single projective line, so 
$A$ preserves the line extending $\lambda$ as claimed.
\end{proof}

\begin{claim}[3]
The side pairings $A_t, B_t\in\Isom(\mathbb{X})$ converge in $\PGL(3,\R)$.
\end{claim}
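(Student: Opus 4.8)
The plan is to realize each side pairing as a product of a single fixed matrix with a $\pi$-rotation about a point that converges, and then compute the limits of these $\pi$-rotations directly in homogeneous coordinates. \textbf{Step 1 (a decomposition forced by the central symmetry of $Q$).} As in the proof of Claim 1, the matrix $R=\diag\{-1,-1,1\}$ lies in $\Isom_+(\mathbb{X}_t)$ for every $t$, being orthogonal for the relevant forms and fixed under diagonal conjugacy, and it is the $\mathbb{X}_t$-rotation by $\pi$ about $\vec 0$. Since $Q$ is an affine parallelogram with centroid $\vec 0$, $R$ carries each side $\ell_-$ to its opposite side $\ell_+$; hence $R$ sends the $\mathbb{X}_t$-midpoint $m_-^t$ of $\ell_-$ to the $\mathbb{X}_t$-midpoint $m_+^t=-m_-^t$ of $\ell_+$, and the geodesic $\overline{m_-^t m_+^t}$ passes through $\vec 0$. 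Writing $\sigma_q^t\in\Isom_+(\mathbb{X}_t)$ for the $\mathbb{X}_t$-rotation by $\pi$ about $q$, I would check that $R\circ\sigma_{m_-^t}^t$ is orientation preserving, fixes $\overline{m_-^t m_+^t}$ setwise, and carries the oriented segment $\ell_-$ to $\ell_+$ by the identification that glues $Q$ into a torus; by uniqueness of the orientation-preserving isometry matching two congruent oriented segments, this identifies it with the side pairing, so $A_t=R\cdot\sigma_{m_-^t}^t$, and likewise $B_t=R\cdot\sigma_{n_-^t}^t$ for $n_-^t$ the midpoint of an adjacent side.

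\textbf{Step 2 (the limit of the $\pi$-rotations).} Since we may take $\mathbb{X}_t$ to be diagonal conjugates converging to $\Hs^2$, write $g_t=\diag\{a_t,b_t,1\}$ with $a_t,b_t\to\infty$, so that $\mathbb{X}_t$ preserves $J_t:=g_t^{-\top}J_{\mathbb{X}}g_t^{-1}=\diag\{a_t^{-2},b_t^{-2},\epsilon\}$ with $\epsilon=+1$ for $\S^2$ and $\epsilon=-1$ for $\H^2$. The $\pi$-rotation about a point $q$ with $q^\top J_t q\neq 0$ is the $J_t$-reflection in the line $\R q$, with matrix $M_q^t=\dfrac{2\,q\,(J_t q)^\top}{q^\top J_t q}-I$. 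For $q=(q_1,q_2,1)$ in a compact subset of the affine patch, $J_t q\to\epsilon e_3$ and $q^\top J_t q\to\epsilon\neq0$, so $M_q^t\to 2q\,e_3^\top-I=\smat{-1&0&2q_1\\0&-1&2q_2\\0&0&1}$, the affine $\pi$-rotation $\tau_q\colon x\mapsto 2q-x$, which lies in $\Heis_+$; moreover this convergence is uniform over such a compact set.

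\textbf{Step 3 (convergence of the midpoints and conclusion).} The midpoints $m_-^t$ and $n_-^t$ lie in fixed compact edges of $Q$, and they converge: by Claim 3, or directly because $m_-^t$ is the unique point of $\ell_-$ at which $\sigma_q^t$ interchanges the two endpoints of $\ell_-$, so the uniform limit $\sigma_q^t\to\tau_q$ of Step 2 forces every subsequential limit of $m_-^t$ to be the affine midpoint of $\ell_-$. Combining the three steps, $A_t=R\cdot M_{m_-^t}^t$ and $B_t=R\cdot M_{n_-^t}^t$ converge in $\PGL(3,\R)$, to $R\,\tau_{m_-^\infty}$ and $R\,\tau_{n_-^\infty}$ (both in $\Heis_+$); the Euclidean case is immediate since there $\mathbb{X}_t$ may be taken constant.

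The step I expect to be the main obstacle is Step 1: making the ``argument in absolute geometry'' precise and uniform across $\S^2,\E^2,\H^2$, in particular verifying that $R\circ\sigma_{m_-^t}^t$ realizes exactly the edge identification producing a torus (rather than its reverse, which would give a Klein bottle) and that the midpoints, the geodesic $\overline{m_-^t m_+^t}$, and the $R$-image of $\ell_-$ fit together as asserted. Steps 2 and 3 are then routine coordinate computations.
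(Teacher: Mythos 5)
Your proof is correct, but it takes a genuinely different route from the paper's. The paper argues by compactness and non-degeneracy: the vertices of $Q$ form a projective basis, their images under $A_t$ subconverge in $(\RP^2)^4$, and the limit is shown to remain a projective basis by pulling back via $C_t$ to the fixed geometry $\mathbb{X}$ and squeezing the Euclidean area distortion of $X_t=C_tA_tC_t\inv$ on the collapsing triangles $C_t\Delta$ between $(c(\tau_t)\pm\ep_t s(\tau_t))^{-3}$ with $\tau_t,\ep_t\to 0$. Strictly speaking this yields precompactness with nondegenerate subsequential limits, and the actual limit is only pinned down later in the main theorem via Claim 3 and the simply transitive action of $\Heis_0$ on pointed non-horizontal lines. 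You instead factor $A_t=R\cdot\sigma^t_{m_-^t}$ as a product of $\pi$-rotations and compute the limit of the rotation matrices $M^t_q=\tfrac{2q(J_tq)^\top}{q^\top J_tq}-I\to 2qe_3^\top-I$ in coordinates. This buys genuine convergence of the whole sequence together with an explicit limit $R\tau_{m_-^\infty}$, which is the translation by $-2m_-^\infty$ and hence recovers the original side pairing $A$ directly --- so your argument subsumes Claim 3 and part of the main theorem's identification of $A_\infty$ in one stroke. The cost is the absolute-geometry verification in Step 1, which does go through: in the projective model the $\mathbb{X}_t$-geodesic sides of $Q$ coincide with its affine sides, $\sigma^t_{m_-^t}$ swaps the endpoints $v_1,v_2$ of $\ell_-$, and $R$ sends them to $-v_1=v_3$, $-v_2=v_4$, so the orientation-preserving composite realizes $v_1\mapsto v_4$, $v_2\mapsto v_3$, which is the torus (not Klein bottle) identification; uniqueness of the matching isometry then gives $A_t=R\sigma^t_{m_-^t}$. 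The only caveat, inherited from the paper's own setup rather than introduced by you, is that for the elliptic model of $\S^2$ in $\RP^2$ the matching isometry is unique only up to the $\pi$-rotation about the pole of the target line, so the phrase ``by uniqueness'' should be read as ``by the definition of the gluing.''
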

\begin{proof}
A projective transformation of $\RP^2$ is completely determined by its values on a projective basis (a collection of four points in general position). 
The vertices $(v_i)$ of $Q$ form a convenient projective basis with images $(A_tv_i)$ completely specifying the transformations $A_t$.
These transformations converge in $\PGL(3;\R)$ if and only if $(A_tv_i)$ limits to a projective basis, which, as the images $A_tv_i$ remain in a bounded neighborhood of $Q$
\footnote{The conjugating path $C_t$ is \emph{expansive}, with eigenvalues $\lambda_t>\mu_t$ each monotonic in $t$.  Then for $\mathbb{X}=\H^2$, its easy to see $A_tQ\subset AQ$, and for $\mathbb{X}=\S^2$, that $A_tQ<A_0 Q$ for all $t>0$.}
is equivalent to no triangle $\Delta\subset Q$ formed by 3 vertices of $Q$ collapsing in the limit.
That is, it suffices to show 
$\Area_{\E^2}(A_t\Delta)/\Area_{\E^2}(\Delta)\not\to 0$.

Diagonal transformations act linearly on the affine patch and do not change ratios of areas, thus we may transform this to the fixed model $\mathbb{X}$ with a collapsing sequence of triangles $\Delta_t$ being moved by transformations $C_t=D_tA_tD_t\inv$.
For large $t$, both $\Delta_t$ and $C_t\Delta_t$ are extremely close to the origin $\vec{0}\in\A^2$ and we may estimate their area ratio analytically.
By claim 2, $C_t$ preserves the geodesic through the midpoints of paired sides, thus is either a hyperbolic in $\Isom(\H^2)$, or rotation in $\Isom(\S^2)$ with axis represented by an ideal point relative the affine patch.
In each of these cases we may bound the distortion of Euclidean area under these isometries as follows.

Up to conjugation by a rotation we may express any such isometry as
$C=\left(\begin{smallmatrix}c(\tau)&0&s(\tau)\\0&1&0\\ s(\tau)&0&c(\tau)\end{smallmatrix}\right)$, where $(c,s)=(\cosh,\sinh)$ for $\mathbb{X}=\H^2$ and $(\cos,\sin)$ for $\mathbb{X}=\S^2$, and $\tau$ is the translation length along the preserved geodesic.
At any $p=[x:y:1]\in\mathbb{A}^2$ the infinitesimal area distortion $J(p)$ is given by the Jacobian of the projective action of $C$ on the affine patch.
On any region $R\subset\mathbb{A}^2$ then, the overall area distortion $\Area_{\E^2}(C(R))/\Area_{\E^2}{R}$ is bounded below by $J_{\min}(R)=\inf_{p\in R}{J(p)}$ and above by $J_{\max}(R)=\sup_{p\in R}{J(p)}$.

Consider again the region $\Delta_t$ and the side pairing $C_t$ with side pairing of translation length $\tau_t$.  
As $t\to\infty$, both $\Delta_t$ and $C_t\Delta_t$ collapse to $\vec{0}$.
Thus for any $\ep>0$ and all sufficiently large $t$, both of these regions are subsets of the $\ep$-ball about $0$, and 
 we may bound the overall area distortion by $J_{\min}(B_\ep)$ and $J_{\max}(B_\ep)$.
 Computing these, we see
 
$$\frac{1}{(c(\tau) + \ep s(\tau))^3}\leq \frac{\Area_{\E^2}(C_t\Delta_t)}{\Area_{\E^2}(\Delta_t)}\leq\frac{1}{(c(\tau)-\ep s(\tau))^3}.$$

As $t\to\infty$ the translation length $\tau_t$ converges to $0$ (as the geometric structure's developing map collapses to the constant map onto the origin).  Thus the above bounds squeeze the limiting area of $C_t \Delta_t$ to $\Delta_t$ by $1$, so the area of $A_t\Delta$ does not collapse in the limit.
\end{proof}

\begin{claim}[4]
Let $\ell\subset \A^2$ be a line segment and $\mathbb{X}_t\to\Hs^2$ as above.  Then the $\mathbb{X}_t$ midpoint of $\ell$ converges to the Euclidean midpoint.
\end{claim}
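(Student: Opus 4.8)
The plan is to pass from the metric definition of a midpoint to the explicit projective model of $\mathbb{X}_t$, where the geodesic midpoint has a simple closed form, and then take a direct limit. Recall from the proof of Claim 2 that $\mathbb{X}_t=C_t^{-1}\mathbb{X}C_t$ for a diagonal matrix $C_t=\diag\{a_t,b_t,1\}$, and that the hypothesis $\mathbb{X}_t\to\Hs^2$ forces $a_t\to 0$ and $b_t\to 0$ (this is precisely why $\Delta_t=C_t\Delta$ collapses to $\vec 0$ there). The case $\mathbb{X}=\E^2$ is immediate: $C_t$ acts on $\A^2$ as an invertible linear map, so the $\mathbb{X}_t$-geodesic through the endpoints of $\ell$ is $\ell$ itself and $C_t$ carries Euclidean midpoints to Euclidean midpoints, whence the $\mathbb{X}_t$-midpoint of $\ell$ is its Euclidean midpoint for every $t$. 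So assume $\mathbb{X}\in\{\S^2,\H^2\}$, realized in $\RP^2$ as the projectivization of the positive (resp.\ negative) cone of $Q_0=\diag\{1,1,\epsilon\}$, with $\epsilon=1$ for $\S^2$ and $\epsilon=-1$ for $\H^2$ and $\A^2=\{[x:y:1]\}$. Then $\mathbb{X}$-isometries preserve $Q_0$, so $\mathbb{X}_t$-isometries preserve the conjugated form $Q_t=C_tQ_0C_t=\diag\{a_t^2,b_t^2,\epsilon\}$.

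The key input is the classical formula for midpoints in these models. For a quadratic form $Q$ of the relevant signature, the point reflection about $[w]$ --- the involution in $\O(Q)$ fixing $[w]$ and acting by $-\mathrm{id}$ on the tangent space there --- is $\sigma^Q_{[w]}=-I+\tfrac{2}{w^TQw}\,w\,w^TQ$. A one-line computation shows that if $v_p,v_q$ represent two sufficiently nearby points $p,q$, lie on the appropriate sheet, and are normalized so that $v_p^TQv_p=v_q^TQv_q=\epsilon$, then $\sigma^Q_{[v_p+v_q]}(v_p)=v_q$; since $[v_p+v_q]$ lies on the unique geodesic segment $[p,q]$ and is fixed by the reflection interchanging its endpoints, it is the midpoint of $[p,q]$. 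Applied with $Q=Q_t$, this identifies the $\mathbb{X}_t$-midpoint of $[p,q]$ as $\big[\,v_p/\sqrt{|v_p^TQ_tv_p|}+v_q/\sqrt{|v_q^TQ_tv_q|}\,\big]$.

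Now fix the endpoints $p=(p_1,p_2)$, $q=(q_1,q_2)$ of $\ell$ and put $v_p=(p_1,p_2,1)$, $v_q=(q_1,q_2,1)$. Because $a_t,b_t\to0$, one has $v_p^TQ_tv_p=a_t^2p_1^2+b_t^2p_2^2+\epsilon\to\epsilon$, and likewise $v_q^TQ_tv_q\to\epsilon$ and $v_p^TQ_tv_q\to\epsilon$; hence for large $t$ the vectors $v_p,v_q$ lie on the correct sheet, the $\mathbb{X}_t$-distance satisfies $\mathrm{dist}_{\mathbb{X}_t}(p,q)\to0$ so $p,q$ are eventually non-antipodal and the midpoint formula applies, and the normalized representatives satisfy $v_p/\sqrt{|v_p^TQ_tv_p|}\to v_p$ and $v_q/\sqrt{|v_q^TQ_tv_q|}\to v_q$. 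Therefore the $\mathbb{X}_t$-midpoint of $\ell$ converges to $[v_p+v_q]=\big[\tfrac{p_1+q_1}{2}:\tfrac{p_2+q_2}{2}:1\big]$, the Euclidean midpoint of $\ell$; all the points involved stay in the affine chart, so this is convergence in $\A^2$, and it is uniform for $\ell$ in any compact family of segments.

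The only points needing genuine care are the two noted above: that under the convention in force $a_t,b_t$ decay to $0$ rather than blow up (with the opposite behavior the conclusion would be false), and the non-degeneracy $\mathrm{dist}_{\mathbb{X}_t}(p,q)\to0$ that makes ``midpoint'' unambiguous. I would regard locating the right model and the closed form for the midpoint as the conceptual content here, and do not anticipate a real obstacle beyond that --- once these are in place the limit is routine.
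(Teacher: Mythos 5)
Your proof is correct, but it takes a genuinely different route from the paper's. The paper never writes down the $\mathbb{X}_t$-midpoint explicitly: it pulls the segment back by the diagonal matrices $C_t$ (which preserve Euclidean midpoints of segments), and then squeezes the ratio $d_{\mathbb{X}_t}(p,m)/d_{\mathbb{X}_t}(m,q)$ between $1/K_\ep$ and $K_\ep$ using a bilipschitz comparison $d_{\E^2}\leq d_{\mathbb{X}}\leq K_\ep d_{\E^2}$ on small balls with $K_\ep\to 1$ --- a soft, model-independent argument in absolute geometry (and the comparison constants themselves are only asserted, as ``tedious but straightforward''). You instead work in the quadric model, observe that $\Isom(\mathbb{X}_t)$ preserves $Q_t=\diag\{a_t^2,b_t^2,\epsilon\}$, and use the closed form $[\hat v_p+\hat v_q]$ for the midpoint (with $\hat v$ the $Q_t$-unit-normalized lifts), verified via the point reflection $\sigma_{[w]}=-I+\tfrac{2}{w^TQw}ww^TQ$; the limit is then a one-line computation since the normalizations tend to $1$ as $a_t,b_t\to 0$. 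Your version buys an explicit formula, a rate of convergence, and uniformity over compact families of segments for free, and it replaces the unproved metric estimates with an algebraic identity; the cost is that it is tied to the specific quadric models of $\S^2$ and $\H^2$ rather than to synthetic properties shared by all three geometries. Both arguments correctly hinge on the same two non-trivial points you flag: the orientation of the conjugacy (the entries of $C_t$ decay, so the pulled-back segment collapses to the origin) and eventual non-degeneracy of the midpoint.
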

\begin{proof}

Let $\ell=\overline{pq}$ and $m\in\ell$ be the Euclidean midpoint.
Viewing $\ell$ in $\mathbb{X}_t$, it has $\mathbb{X}_t$ midpoint $y_t$, and to show $y_t\to m$ it suffices to see $d_{\mathbb{X}_t}(p,m)/d_{\mathbb{X}_t}(m,q)\to 1$. 
Ratios of collinear line segment lengths are invariant under linear transformations, so we may choose to view this situation in the fixed model $\mathbb{X}$ for ease of calculation, with a shrinking line segment $\ell_t=\overline{p_tq_t}$ with Euclidean midpoint $m_t$ and $\mathbb{X}$ midpoint $x_t$.

For $\mathbb{X}=\H^2$ a straightforward computation shows the length of any segment $\ell\subset B_{\E^2}(0,\ep)$ is bounded by a multiple of its Euclidean length 
$\mathsf{Length}_{\E^2}(\ell)\leq \mathsf{Length}_{\mathbb{X}}(\ell)\leq K_\ep \mathsf{Length}_{\E^2}(\ell)$ where $K_\ep$ may be chosen\footnote{For hyperbolic space we may choose $K_\ep=1/\sqrt{1-4\ep^2}$ and for the sphere $K_\ep=1/(1+\ep^2)$ with $\ep$ measured in the Euclidean metric on the affine patch} so that $K_\ep>1, \lim_{\ep\to 0} K_\ep=1$.
Similarly pulling back the spherical metric to the affine patch there is such a $K_\ep>1$ with $\mathsf{Length}_{\E^2}(\ell)/K_\ep \leq \mathsf{Length}_{\mathbb{X}}(\ell)\leq \mathsf{Length}_{\E^2}(\ell)$.
We may use this to bound the difference between the $\mathbb{X}$ and Euclidean midpoints of the shrinking segments $\ell_t$.
	
$$\frac{1}{K_\ep}=\frac{d_{\E^2}(p_t,m_t)}{K_\ep d(m_t,q_t)}\leq \frac{d_{\mathbb{X}}(p_t,m_t)}{d_{\mathbb{X}}(m_t,q_t)}=\frac{d_{\mathbb{X}_t}(p,m)}{d_{\mathbb{X}_t}(m,q)}\leq \frac{K_\ep d_{\E^2}(p_t,m_t)}{d_{\E^2}(m_t,q_t)}=K_\ep.$$
As $\mathbb{X}_t\to\Hs^2$, $\ell_t$ collapses to $\vec{0}$ and we may take smaller and smaller $\ep$ so this ratio converges to 1.
\end{proof}

\subsection{Shear Tori}

Every translation Heisenberg torus arises as a limit of Euclidean, Hyperbolic and Spherical cone tori with at most one cone point.
Translation structures are rather special Heisenberg tori, compromising a codimension-one subset of deformation space.
Here we investigate the generic case, Heisenberg tori with nontrivial shears in their holonomy, and show none regenerate as cone structures with a single cone point.
Shears of the plane fix a single line, and alter the slope of all lines not parallel to this.
All shears in $\Heis$ are parallel, so the holonomy of any shear torus leaves invariant precisely one slope on $\Hs^2$.
This has strong consequences for the distribution of geodesics on Heisenberg orbifolds.

\begin{proposition}
A Heisenberg orbifold $\mathcal{O}$ has a shear in its holonomy if and only if all simple geodesics on $\mathcal{O}$ are pairwise disjoint.
\end{proposition}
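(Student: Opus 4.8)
The plan is to push the whole question down to the complete-torus case through the finite cover $p\colon T\to\mathcal{O}$ of Proposition 3.1, exploiting there that the developing map is a diffeomorphism onto $\A^2$ (Theorem 5.1): a geodesic of $T$ is exactly the image of an affine line, and a \emph{closed} geodesic corresponds to an affine line $\ell\subset\A^2$ left invariant by a nontrivial element of $\rho(\Z^2)$, on which that element then acts by translation. Since every element of $\Heis$ permutes the horizontal lines, the horizontal foliation $\mathcal{F}$ of $\A^2$ descends to a foliation of every Heisenberg orbifold, and two geodesics are parallel precisely when their lifts to $\A^2$ are parallel affine lines; in particular any two leaves of $\mathcal{F}$ are parallel.

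The computation at the heart of the argument is to record which affine lines are invariant under a given $g=\smat{1 & a & c\\ 0 & 1 & b\\ 0 & 0 & 1}\in\Heis_0$: if $a\neq 0$ (a genuine shear) then $g$ fixes a line only when $b=0$, and in that case it fixes exactly the horizontal lines, while if $a=0$ (a translation) then the $g$-invariant lines are precisely those parallel to its translation vector. Feeding this into the coordinates of Theorem 5.1, a shear torus has $\cos\theta\neq 0$, so every nontrivial holonomy element is either a horizontal translation or a shear with nonzero vertical displacement (proportional to $\sin\theta$); the latter fixes no affine line at all, so every closed geodesic of a shear torus develops to a horizontal line and is a leaf of $\mathcal{F}$. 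Hence all simple closed geodesics of a shear torus are parallel. Conversely a translation torus is a flat torus $\A^2/\rho(\Z^2)$ with $\rho(\Z^2)\subset\mathsf{Tr}$, and the line through the origin parallel to any primitive lattice vector descends to a simple closed geodesic, so two non-parallel primitive vectors exhibit two non-parallel simple closed geodesics.

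For the orbifold statement I would run both implications through $p$, using that the covering torus $T$ is a shear torus if and only if $\mathcal{O}$ is a shear orbifold. If $\mathcal{O}$ is shear, any simple closed geodesic of $\mathcal{O}$ pulls back along $p$ to a union of closed geodesics of the shear torus $T$, which are leaves of $\mathcal{F}$, so the original geodesic is a leaf of the descended foliation and all such geodesics are parallel. If $\mathcal{O}$ is a translation orbifold, then the linear parts of $\rho(\pi_1\mathcal{O})$ form a finite subgroup of $\GL(2,\R)$, which a linear change of coordinates $C$ conjugates into $\O(2)$; thus $\mathcal{O}$ is affinely equivalent to a Euclidean orbifold of zero Euler characteristic with at most order-two singular points, and since $C$ carries affine lines to affine lines and preserves parallelism it suffices to produce two non-parallel simple closed geodesics on each such Euclidean orbifold — for instance a core curve together with a transverse geodesic that reflects perpendicularly between mirror walls or passes through order-two cone points. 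A short case check over the nine orbifolds of the table finishes this direction.

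The step I expect to be the real obstacle is the orbifold bookkeeping rather than the torus computation. One must verify that a simple closed geodesic meeting the singular or mirror locus of $\mathcal{O}$ still pulls back to genuine closed geodesics on $T$ — order-two ramification points and unfolded mirror walls must send geodesics to geodesics — and one must confirm that the transverse simple closed geodesics produced in the translation case are honestly embedded. The first point is handled by the explicit local model of the cover $p$ near the singular locus; the second by examining the explicit Euclidean structures underlying the nine orbifolds, where a perpendicular "bouncing" geodesic or a geodesic through two order-two cone points is visibly simple and transverse to a core curve.
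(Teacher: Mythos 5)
Your overall strategy---develop to the plane, identify geodesics with affine lines, and play off the fact that a shear changes the slope of every non-horizontal line while a translation holonomy preserves two independent families of parallel lines---is the same as the paper's. But there is a genuine gap in your forward direction: you only establish that all simple \emph{closed} geodesics of a shear torus are parallel, because your mechanism identifies closed geodesics with affine lines invariant under a single nontrivial element of $\rho(\Z^2)$. The proposition concerns all simple geodesics, and on a shear torus the non-closed ones are not exceptional: a horizontal leaf projects to a non-closed simple geodesic whenever its stabilizer in the holonomy is trivial (e.g.\ when $\tan\phi\notin\Q$). Your per-element invariant-line computation says nothing about such a geodesic, since no single nontrivial holonomy element need preserve any one of its lifts; in particular you have not excluded a non-horizontal, non-closed simple geodesic, which would intersect the horizontal leaves and defeat the conclusion. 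The paper closes this by looking at the \emph{entire} preimage of an arbitrary simple geodesic: it is a $\pi_1(\mathcal{O})$-invariant collection of pairwise disjoint, hence mutually parallel, lines, and since a shear alters the slope of every non-horizontal line it cannot permute a parallel family unless that family is horizontal. That single observation replaces your case analysis and treats closed and non-closed geodesics uniformly; your argument needs it (or an equivalent) to prove the statement as written.

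On the converse, your detour through conjugating the finite group of linear parts into $\O(2)$ and checking the nine Euclidean orbifolds is more machinery than the paper uses: it simply notes that $\rho(a)$ and $\rho(b)$ are linearly independent translations, each preserving every line of its own parallel family, and lets the two resulting families of intersecting closed geodesics on $T$ descend to $\mathcal{O}$. Your concern about whether the descended geodesics on the quotient orbifolds remain simple is legitimate, but the paper is equally terse on that point, so I would not count it against you; the substantive issue to repair is the closed-versus-general distinction in the forward direction.
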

\begin{proof}

Let $\mathcal{O}$ be a shear orbifold and $\gamma$ a simple geodesic on $\mathcal{O}$.  
As $\mathcal{O}$ is covered by a complete torus we identify $\widetilde{\mathcal{O}}$ with $\Hs^2$, and the preimage of $\gamma$ under the covering with a $\pi_1(\mathcal{O})$-invariant collection $\{\tilde{\gamma}\}$ of lines in $\Hs^2$.  
As $\gamma$ is simple these are pairwise disjoint and so parallel in $\A^2$.  
Because $\mathcal{O}$ has a shear structure, some $\alpha\in\pi_1(\mathcal{O})$ acts on $\Hs^2$ by a nontrivial shear, which alters the slope of all non-horizontal lines.  Thus, $\{\tilde{\gamma}\}$ is a subset of the horizontal foliation.  
But this holds for any simple geodesic on $\mathcal{O}$ so any two must each lift to a subset of the horizontal foliation, which are then disjoint or (by $\pi_1(\mathcal{O})$ invariance) equal. 
If the two geodesics lift to disjoint collections then their projections are also disjoint, meaning any two distinct simple geodesics on $T$ cannot intersect.

Conversely assume $\mathcal{O}$ is an orbifold covered by a translation torus $T$ given by the developing pair $(f,\rho)$, for $\rho\colon \Z^2\to \mathsf{Tr}$.  
Then $\rho(e_1)$ and $\rho(e_2)$ are linearly independent translations, each preserving each component of a family of parallel lines descending to closed intersecting geodesics on $T$ and further descend to intersecting geodesics on $\mathcal{O}$.
\end{proof}

Hyperbolic, spherical and Euclidean (cone) tori behave quite differently than this.  
Recall that any generators $\langle a,b \rangle=\pi_1(T)$ have geodesic representatives through the cone point and cutting along these gives a constant-curvature parallelogram with side pairings.  
Claim 2 of the previous section shows these side parings must preserve the full projective lines through the midpoints of the paired edges, so these descend to intersecting closed geodesics on $T$.  
The following argument shows this property remains true in the limit.

\begin{theorem}
Let $\mathbb{X}\in\{\S^2,\E^2,\H^2\}$ and $\mathbb{X}_t=D_t \mathbb{X}$ a sequence of conjugate geometries converging to the Heisenberg plane, for $D_t$ diagonal.
Let $T_t$ be a sequence of $\mathbb{X}_t$ cone tori with at most one cone point converging to some Heisenberg torus $T$.
Then $T$ is a translation torus.
\end{theorem}

\begin{proof}

By Proposition \ref{Prop:Quad_Convergence} we may represent these structures by a sequence of $\mathbb{X}_t$ parallelograms $(Q_t,A_t,B_t)$ converging to the triple $(Q_\infty,A_\infty, B_\infty)$ describing the Heisenberg torus $T$.

Claim 2 of the previous section implies that for each $t$, the side pairing $A_t$ preserves the projective line $\alpha_t$ connecting the $\mathbb{X}_t$ midpoints of the paired sides. 
As $t\to\infty$ this sequence of lines in $\RP^2$ subconverges to a projective line $\alpha_\infty$.  
Since $A_t(\alpha_t)=\alpha_t$ for all $t$, it follows that $A_\infty(\alpha_\infty)=\alpha_\infty$, so this line is preserved by the limiting action.
By Claim 3, $\alpha_\infty$ passes through the Euclidean midpoints of opposing sides of $Q_\infty$.
Thus $\alpha_\infty$ and $\beta_\infty$ descend to closed geodesics on $T$. 

As $\alpha_t, \beta_t$ intersect $\partial Q_t$ in the $\mathbb{X}_t$ midpoints of opposing sides, they divide $Q_t$ into four congruent quadrilaterals. Thus the lines $\alpha_t, \beta_t$ intersect at the center of mass of $Q_t$.  
It follows that in the limit the lines $\alpha_\infty,\beta_\infty$ intersect at the center of $Q_\infty$ and  the closed geodesics on $T$ given by the projections of $\alpha_\infty,\beta_\infty$ intersect.
As $T$ has intersecting geodesics, $T$ cannot have any shears in its holonomy, and thus is a translation torus.
\end{proof}

It would be interesting to consider the regeneration of shear tori without restricting to a single cone point.
In particular, whether a sequence of Euclidean cone tori with two cone points, one of cone angle less than $\pi$ and the other greater than $\pi$ could converge to a Heisenberg shear torus provides an intriguing possibility that is yet unknown to the author.
Constructing such examples (or proving the nonexistence thereof) likely requires different techniques than those of section 4.

\section*{Appendix: Heisenberg Orbifolds}

Proposition \ref{Prop:Orbifold_Def} provides a strategy for computing the remaining orbifold deformation spaces:
given $\mathcal{D}(\mathcal{Q})$ and a covering map $\mathcal{Q}\to\mathcal{O}$ we identify $\mathcal{D}(\mathcal{O})$ with the collection of all extensions of $\rho\in\Hom(\pi_1(\mathcal{Q}),\Heis)/\Heis_+$ to $\pi_1(\mathcal{O})$ up to $\Heis$ conjugacy fixing $\rho$.
The following figure shows all Heisenberg orbifolds, with arrows representing the finite covers used in the calculation of their deformation spaces.

\begin{figure}[h!]
\centering
\includegraphics[width=0.8\textwidth]{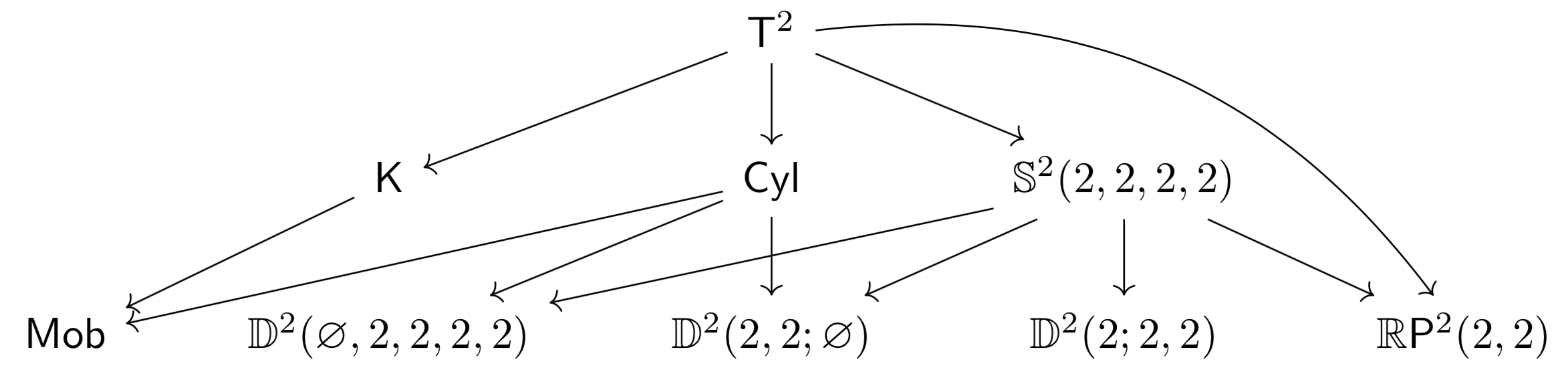}
\caption{All Heisenberg orbifolds are finitely covered by a Heisenberg torus, and furthermore all with cone points or corner reflectors are covered by the pillowcase $\S^2(2,2,2,2)$.}	
\end{figure}

Recall that a \emph{translation torus} has holonomy acting purely by translations.
The Teichm\"uller space of translation tori is homeomorphic to $\R_+\times\S^1$, parameterized by rectangular lattices with ratio of generator lengths in $\R_+$ and angle of first vector $\theta\in\S^1$ with the horizontal.
A translation torus is called \emph{axis aligned} if the holonomy contains a translation along the invariant foliation (up to $\Heis_0$ conjugacy such a structure can actually be assumed to have holonomy generated by translations along the coordinate axes).
Within the Teichm\"uller space $\mathcal{T}_{\Hs^2}(T^2)$, the subset of axis-aligned translation tori is homeomorphic to $\R_+\sqcup\R_+$ corresponding to the points of $\mathcal{F}\cap V(x_1,\; x_2,\; y_1y_2)$.

\begin{proposition}
Every Heisenberg structure on the pillowcase $P=\S^2(2,2,2,2)$ is uniquely covered by a translation torus, and so $\mathcal{T}_{\Hs^2}(P)\cong \R\times\S^1$.
\end{proposition}
\begin{proof}
The twofold branched cover $T\to \S^2(2,2,2,2)=P$ exhibits $\pi_1(P)$ as a $\Z_2=\langle r\rangle$ extension of $\pi_1(T)=\langle a,b\rangle$ with $rar=a\inv$, $rbr=b\inv$.
Thus $\mathcal{D}_{\Hs^2}(P)$ is parameterized by pairs $[\rho, R]$ for $R$ conjugating images under $\rho$ to their inverses.  
Any orientation-preserving element of order two in $\Heis$ is a $\pi$-rotation about some point $p\in\Hs^2$.  Rotations only conjugate translations to their inverses so $\rho$ is the holonomy of a translation torus.  
Given any translation torus, the $\pi$-rotation about any point in the plane provides an extension of $\rho$, and any two are conjugate by conjugacies fixing $\rho$.  Thus restriction provides a bijection from $\mathcal{D}_{\Hs^2}(\S^2(2,2,2,2))$ onto translation tori.
\end{proof}

\begin{proposition}
All Heisenberg Cylinders are quotients of an axis-aligned translation torus, or a shear torus with one generator of the holonomy a horizontal translation.  Thus $\mathcal{T}_{\Hs^2}(\mathsf{Cyl})\cong\R\sqcup \R^2$.
\end{proposition}
\begin{proof}
The doubling mirror double of a cylinder is a torus, and the corresponding orbifold cover $T\to \mathsf{Cyl}$ exhibits $\pi_1(\mathsf{Cyl})$ as a $\Z_2=\langle f\rangle$ extension of $\pi_1(T)$ with $faf=a$, $fbf=b\inv$.  
Thus $\mathcal{D}_{\Hs^2}(\mathsf{Cyl})$ is parameterized by conjugacy classes of pairs $[\rho, F]$ with $\rho\in\mathcal{D}(T)$ and $F$ satisfying the relations above with respect to $\rho(a)$, $\rho(b)$.  
For each $\rho$ with $\rho(a)$ a horizontal translation, there is a one-parameter family of solutions $F$ to the system, all conjugate via conjugacies fixing $\rho$ to a reflection across the horizontal, $\diag\{1,-1,1\}$.
 Thus there is a unique quotient corresponding to each $\rho\in\mathcal{D}_{\Hs^2}(T)$ with $\rho(a)$ a horizontal translation.
If $\rho(a)$ is not a horizontal translation, the system of equations above only has solutions when $\rho\in\mathcal{D}(T)$ is an axis aligned translation torus with $\rho(a)$ vertical, $\rho(b)$ horizontal and $F=\diag\{-1,1,1\}$. 
Thus the Teichm\"uller space consists of the union of the space of axis-aligned tori with all tori having $\rho(a)$ a horizontal translation.	The space of tori with $\rho(a)$ horizontal identifies with a slice $\R_+\times\R$ of $\mathcal{T}_{\Hs^2}(T^2)=\R_+\times\R\times\S^1$ with fixed $\theta=0\in\S^1$, intersecting the space $\R_+\sqcup\R_+$ of axis-aligned translation tori in one copy of $\R_+$. 
\end{proof}

\begin{proposition}
All Heisenberg Klein bottles are quotients of an axis-aligned translation torus, or a shear torus with one generator of the holonomy a horizontal translation.  Thus $\mathcal{T}_{\Hs^2}(\mathsf{K})\cong\R\sqcup \R^2$.
\label{Prop:KleinBottle}
\end{proposition}
\begin{proof}
The Klein bottle $K$ has orientation double cover $T\to K$ corresponding to $\pi_1(K)=\langle x,b\mid xbx\inv=b\inv\rangle$ with $\pi_1(T)=\langle x^2,b\rangle$ so $\mathcal{D}(K)$ is parameterized by pairs $[\rho,X]$ for $\rho\in\mathcal{D}_{\Hs^2}(T)$ and $X^2=\rho(a)$ satisfying $X\rho(b)X\inv\rho(b)=I$.  
As orientation reversing elements of $\Heis$ square to translations, $\rho(a)\in\mathsf{Tr}$, and we distinguish two cases depending on the component $X$ lies in.

If $X\in\diag\{-1,1,1\}\Heis_0$ reflects across the vertical and conjugates $\rho(b)\in\Heis_0$ to its inverse, $\rho(b)$ cannot have any vertical translation component, and so preserves the horizontal foliation.  
As $\rho\in\mathcal{D}_{\Hs^2}(K)$, combining with $\rho(a)\in\mathsf{Tr}$ shows $\rho$ is the holonomy of an axis-aligned translation torus,
and there is a unique solution for $X$ up to conjugacy $\tilde{\rho}(X)=\smat{-1 & 0 & 0\\ 0 &1& r/2\\0&0&1}$.  
If $X\in\diag\{1,-1,1\}\Heis_0$ reflects across the horizontal, the only solutions to $X^2=\rho(a)$ are horizontal translations, and $\rho(b)$ must not have horizontal translational component.
There is a one-parameter family of solutions $X$ to the system, all conjugate via conjugacies fixing $\rho$ to a glide reflection across the horizontal, $\smat{-1 &0&-\lambda/2\\0&1&0\\0&0&1}$.
	
\end{proof}

\begin{corollary}
The space of M\"obius bands identifies with the space of Klein bottles or Cylinders, $\mathcal{T}_{\Hs^2}(\mathsf{M})\cong\R\sqcup \R^2$.
\end{corollary}
\begin{proof}
A Heisenberg M\"obius band has mirror double a Klein bottle and orientation double cover an annulus,	 so points of $\mathcal{D}_{\Hs^2}(M)$ correspond to triples $[\rho, F, X]$ for $[\rho, X]\in\mathcal{D}(K)$, $[\rho, F]\in\mathcal{D}(\mathsf{Cyl})$ satisfying $FX=XF$.  
Every $\rho\in\mathcal{D}_{\Hs^2}(T)$ that extends to a representation of $\pi_1(\mathsf{Cyl})$ does so uniquely, and also uniquely extends to a representation of $\pi_1(K)$ and so there is a unique M\"obius band covered by the torus with holonomy $\rho$.
\end{proof}

\begin{proposition}
Each Heisenberg structure on $\mathcal{O}\in\{D^2(2,2;\varnothing), \D^2(\varnothing,2,2,2,2),\RP^2(2,2)\}$ is the quotient of a unique axis-aligned translation torus.
Thus $\mathcal{T}_{\Hs^2}(\mathcal{O})\cong \R_+\sqcup\R_+$.
\end{proposition}
\begin{proof}
These three orbifolds are twofold covered by $\S^2(2,2,2,2)$, and thus fourfold covered by translation tori.
The orbifolds $\D^2(2,2;\varnothing)$ and $\D^2(\varnothing;2,2,2,2)$ are also covered by the annulus, and the only translation annuli are axis aligned.  
Each such axis aligned torus has a unique $\D^2(2,2;\varnothing)$ and $\D^2(\varnothing;2,2,2,2)$ quotient.  
The orbifold $\RP^2(2,2)$ arises as a fourfold quotient of the torus by glide reflections $x,y$ such that $\pi_1(T^2)=\langle x^2,y^2\rangle$.  
As seen in the Proposition \ref{Prop:KleinBottle}, each glide reflection squaring to a generator of $\pi_1(T^2)$ is along an axis of $\R^2$, so in this case the torus cover must be an axis-aligned translation torus.  
Each such admits a unique $\RP^2(2,2)$ quotient.

\end{proof}

\begin{proposition}
The orbifold $\D^2(2;2,2)$ has Teichm\"uller space homeomorphic to $\R\sqcup\R$.
\end{proposition}
\begin{proof}
This orbifold is the quotient of the pillowcase by a reflection passing through two opposing cone points, and thus is fourfold covered by a translation torus.
Algebraically this is an extension of $\pi_1(P)=\langle a,b,r\rangle$ by $\langle f\rangle=\Z_2$ satisfying $faf=b$, $fbf=a$, $frf=r\inv$.
Up to $\Heis_+$ conjugacy we may choose representations for homothety classes of translation tori translating along $v_\theta=\smat{\cos \theta\\\sin\theta}$ and $\lambda v_\theta^\perp=\smat{-\lambda \sin\theta\\\lambda\cos\theta}$ uniquely defined for $\theta\in[0,\pi)$, $\lambda>0$.
The only reflections $F$ representing $f$ are parallel to the $x$ or $y$ axes; so the covering torus $T$ cannot be axis 
aligned for this to pass through the cone points of the pillow quotient.
For $F\in\diag(-1,1,1)\Heis_0$ computing with the relations shows there is a solution if and only if $\theta\in(0,\pi)$ and $\lambda=\tan\theta$.
Similarly, for $F\in\diag(1,-1,1)\Heis_0$, a solution exists for $\theta\in(\pi/2,\pi)$ and $\lambda=-\tan\theta$.
These solutions are unique up to conjugacy and so $\mathsf{T}_{\Hs^2}(\D^2(2;2,2))\cong\R\sqcup\R$.
\end{proof}

%
%
%

\bibliography{Refs}
\bibliographystyle{amsalpha}

\end{document}